\newcommand{\RR}{\mathbf R}
\newcommand{\ZZ}{\mathbf Z}
\DeclareMathOperator{\esssup}{ess\,sup}
\DeclareMathOperator{\essinf}{ess\,inf}
\theoremstyle{plain}
\newtheorem{theorem}{Theorem}[section]
\newtheorem{lemma}[theorem]{Lemma}
\newtheorem{proposition}{Proposition}[section]
\newtheorem{claim}{Claim}
\theoremstyle{definition}
\newtheorem{definition}[theorem]{Definition}
\newtheorem{example}[theorem]{Example}
\newtheorem{remark}[theorem]{Remark}
\newtheorem{notation}[theorem]{Notation}
\numberwithin{equation}{section}
\renewcommand{\leq}{\leqslant}
\renewcommand{\geq}{\geqslant}
\title[Strain Induced Slowdown of Front Propagation]{Strain Induced Slowdown of Front Propagation in Random Shear Flow via Analysis of G-equations}
\subjclass[2010]{Primary 70H20; Secondary 76M50}
\keywords{Hamilton-Jacobi equation, level-set convex, stochastic homogenization, the G-equation, strain, random shear flow}
\author[Hongwei Gao]{Hongwei Gao}
\address{
Department of Mathematics \\ % \hfill (Received 00 00 2010)\\
University of California, Irvine   \\ %\hfill (Revised  00 00 2010)\\
CA, 92697\\
USA}
\email{hongweig@math.uci.edu}
\thanks{Partially supported by DMS-1151919} %% optional
\begin{document}

%{\begin{flushleft}\baselineskip9pt\scriptsize
%PUBLICATIONS DE L'INSTITUT MATH\'EMATIQUE\newline
%Nouvelle s\'erie, tome 91(105) (2012), od--do \hfill DOI:
%\end{flushleft}}
\vspace{18mm} \setcounter{page}{1} \thispagestyle{empty}

\begin{abstract}
It is proved that for the 2-dimensional case with random shear flow of the G-equation model with strain term, the strain term reduces the front propagation. Also an improvement of the main result by Armstrong-Souganidis is provided.
\end{abstract}

\maketitle

%The first section: Introduction

\section{Introduction}\label{intro}

\subsection{G-equations and strain effect} The G-equation is a well known model in the study of
turbulent combustion. It is the level set formulation of interface motion laws in the thin interface regime. In the simplest model of the G-equation, the normal velocity of the interface equals a positive constant $s_{L}$ (which is called the laminar speed) plus the normal projection of the fluid velocity $V(x)$, which gives the inviscid G-equation (in the general dimensional situation, we use $x$ as spatial variable and $t$ as time variable):
\begin{equation}
\begin{cases}
G_{t}+V(x)\cdot DG+s_{L}|DG|=0  & (x,t)\in\RR^{d}\times(0,\infty)\\
G(x,0)=G_{0}(x) & x\in\RR^{d}
\end{cases}
\end{equation}

In reality, inter-facial fluctuations appear in front propagation. So, there is a family of G-equations with different oscillation scales.
\begin{equation}
\begin{cases}
G_{t}^{\epsilon}+V(\frac{x}{\epsilon})\cdot DG^{\epsilon}+s_{L}|DG^{\epsilon}|=0 & (x,t)\in\RR^{d}\times(0,\infty)\\
G^{\epsilon}(x,0)=G_{0}(x) & x\in\RR^{d}
\end{cases}
\end{equation}

When $V(x)$ is $\ZZ^{d}$-periodic and nearly compressible, Xin-Yu [\ref{ref J.Xin and Y.Yu 2010}] and Cardaliaguet-Nolen-Souganidis [\ref{ref CNS}] independently proved that $G^{\epsilon}(x,t)\rightarrow \overline{G}(x,t)$ locally uniformly and $\overline{G}$ solves the homogenized Hamilton-Jacobi equation:
\begin{equation}
\begin{cases}
\overline{G}_{t}+\overline{H}(D\overline{G})=0 & (x,t)\in\RR^{d}\times(0,\infty) \\
\overline{G}(x,0)=G_{0}(x) & x\in\RR^{d}
\end{cases}
\end{equation}
The effective Hamiltonian $\overline{H}$ is called the turbulent flame speed or the turbulent burning
velocity in combustion literature. For the stationary ergodic divergence-free flow $V(x,\omega)$  which is the gradient of a stream function that satisfies some integrability condition, Nolen-Novikov [\ref{Nolen and Novikov}] first proved homogenization for the 2-dimensional case. Then for the general dimensional case, if $V(x,\omega)$ is divergence-free and has appropriately small mean, Cardaliaguet-Souganidis [\ref{ref CS random}] proved the homogenization.

Since the flow will stretch or compress the front flame surface, the reaction over the flame front will be affected. Thus the laminar speed $s_{L}$ depends on the flame stretch and therefore can not be constant. To model the strain effect in the G-equation, people extend $s_{L}$ to $s_{L}+c\vec{n}\cdot DV \cdot \vec{n}$, where $\vec{n}$ represents the normal direction. Here the Markstein length $c$ is proportional to the flame thickness. Hence the induced strain G-equation is [\ref{ref N.Peter},\ref{Matalon, M.Matkowsky},\ref{Pelce,P.,Clavin,P.}]
\begin{eqnarray}\label{general strain G-eqn}
G_{t}+V(x,\omega)\cdot DG+s_{L}|DG|+c\frac{DG}{|DG|}\cdot S\cdot DG=0 & S:=\frac{DV+(DV)^{\top}}{2}
\end{eqnarray}
Some interesting questions are: 

(1) Can this strain G-equation be homogenized? 

(2) If yes, how does the strain term affect the turbulent flame speed $\overline{H}(p)$?
\begin{remark}
The strain G-equation (\ref{general strain G-eqn}) is highly non-coercive and non-convex, which increase the difficulty of homogenization.
\end{remark}

When $V$ is a 2-d periodic Cellular Flow, Xin-Yu [\ref{ref Xin and Yu}] showed that due to the existence of a strain term, when the flow intensity (magnitude of $V$) is large enough, the effective Hamiltonian becomes zero. This means that under the effect of strain, the flame is quenched when the flow is too strong.

In this short article, we investigate those questions for 2-d random Shear Flows $V(x,\omega)$ in the stationary ergodic setting.

\subsection{2-d random Shear Flows}
For the $2$-d problem, we denote the space variable by $(x,y)\in\RR^{2}$. Without loss of generality, we assume $s_{L}=1$. We study the problem under a random Shear Flow $V=(v(y,\omega),0)$ and assume $v(y,\omega)$ is stationary ergodic (See section 2 for precise definitions).

Let $p=(m,n)$, the cell problem, if it exists, becomes:
\begin{equation*}
\sqrt{(m+G_{x})^{2}+(n+G_{y})^{2}}+v(y,\omega)\cdot(m+G_{x})+c\frac{(m+G_{x})(n+G_{y})v^{\prime}}{\sqrt{(m+G_{x})^{2}+(n+G_{y})^{2}}}=\overline{H}(m,n,c)
\end{equation*}

This can be reduced to a 1-d problem:
\begin{equation}\label{1-d problem}
\sqrt{m^{2}+(n+G_{y})^{2}}+v(y,\omega)\cdot m+c\frac{m(n+G_{y})v'}{\sqrt{m^{2}+(n+G_{y})^{2}}}=\overline{H}(m,n,c)
\end{equation}

So, it suffices to study the following 1-d Hamiltonian. Since the case $m=0$ is trivial, let's fix $m \ne 0$, and we will denote $\overline{H}(m,n,c)$ by $\overline{H}(n,c)$. As a function of $p$, the following Hamiltonian is not convex but it is level-set convex.
\begin{equation}\label{1-d Hamiltonian}
H(p,x,\omega,c)=\sqrt{m^{2}+p^{2}}+v(x,\omega)\cdot m+c\frac{mpv'(x,\omega)}{\sqrt{m^{2}+p^{2}}}
\end{equation}

In fact,
\begin{equation}
\frac{\partial H(p,x,\omega,c)}{\partial p}=\frac{p^{3}+m^{2}p+cv'm^{3}}{(m^{2}+p^{2})^{\frac{3}{2}}}
\end{equation}
For fixed $x,\omega, c$, $\frac{\partial H}{\partial p}=0$ has a unique real root and $\lim\limits_{p\rightarrow -\infty}\frac{\partial H}{\partial p}=-1, \lim\limits_{p\rightarrow +\infty}\frac{\partial H}{\partial p}=1$. Thus the Hamiltonian is level-set convex(see \textbf{(A2)} in section 2). Actually, by the above facts, $H$ is strictly level-set convex. This means that for each fixed $x,\omega,c$, for any $\mu\in\RR$, $\lbrace p:H(p,x,\omega,c)=\mu \rbrace$ has no interior point.

\subsection{Random homogenization of Hamilton-Jacobi equations with level-set convex Hamiltonians}
Armstrong-Souganidis [\ref{ref Armstrong and Souganidis}] proved random homogenization of the Hamilton-Jacobi equations with level-set convex Hamiltonians. In addition to level-set convexity they require more assumptions. Their proofs depend on the existence of a family of auxiliary functions $\Lambda_{\lambda}\in C(\RR\times\RR)$ that are nondecreasing in both of the arguments and satisfying
\begin{equation}\label{additional assumptions(1)}
\text{For all } \mu\neq\nu, \Lambda_{\lambda}(\mu,\nu)<\max \lbrace \mu,\nu \rbrace
\end{equation}
\begin{equation}\label{additional assumptions(2)}
H(\lambda p+(1-\lambda)q,y,\omega)\leq \Lambda_{\lambda}(H(p,y,\omega),H(q,y,\omega))
\end{equation}
where $p,q,y\in\RR^{d}, \omega\in\Omega$, for each $0<\lambda_{1}\leq \lambda_{2}\leq\frac{1}{2}, \Lambda_{\lambda_{1}}\geq \Lambda_{\lambda_{2}}$. However, the existence of $\Lambda_{\lambda}$ is not straightforward.

So, it is not obvious if the Hamiltonian (\ref{1-d Hamiltonian}) satisfies (\ref{additional assumptions(1)}), (\ref{additional assumptions(2)}). However, based on a very simple modification of the method in [\ref{ref Armstrong and Souganidis}], we will show in section \ref{section 2} that (\ref{additional assumptions(1)}), (\ref{additional assumptions(2)}) are not necessary and random homogenization holds for any level-set convex Hamiltonian with general dimension. Actually, to prove the homogenization of 1-d Hamiltonian in section \ref{section 3}, we do not need the result of section \ref{section 2}. In fact, random homogenization of 1-d coercive Hamiltonian has been established by Armstrong-Tran-Yu[\ref{1-d seperable noncovex by ATY}] in separable case and extended by the author[\ref{Homogenization of general coercive H-J by Gao}] to general coercive case. The following is our main result which will be proved
in section \ref{section 3}. Throughout this paper, all solutions of PDEs are interpreted in the viscosity sense[\ref{Evans}].
\begin{theorem}\label{main thm}
For the 2-dimensional case in the the stationary ergodic setting with a shear flow $V=(v(y,\omega),0)$ such that $v(\cdot,\omega)\in  C^{\infty}(\RR)$ and $v(x,\omega), v'(x,\omega)\in L^{\infty}(\RR\times\Omega)$. Then

$(1)$ The G-equation with strain term (\ref{general strain G-eqn}) can be homogenized. 

$(2)$ For any unit vector $p=(m,n)\in\RR^{2}$ and $c>0,$
\begin{equation*}
\overline{H}(p)=\overline{H}(m,n)\geq \overline{H}(p,c)=\overline{H}(m,n,c)\geq |m|+\sup\limits_{(y,\omega)\in\RR\times\Omega}mv(y,\omega)
\end{equation*}

$(3)$ If $\overline{H}(p)>|m|+\sup\limits_{(y,\omega)\in\RR\times\Omega}mv(y,\omega)$ and $\mathbf{E}[v]=0$, then $\overline{H}(p)= \overline{H}(p,c)$ if and only if $mv\equiv 0.$
\end{theorem}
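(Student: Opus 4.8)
I would prove the theorem in three steps, treating the slowdown (part (2)) and the rigidity (part (3)) together since they rest on one computation.

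\emph{Part (1).} The point is that, although the $2$-d strain Hamiltonian in (\ref{general strain G-eqn}) is non-coercive, the shear structure reduces its cell problem to the one-dimensional Hamiltonian $H(p,x,\omega,c)$ of (\ref{1-d Hamiltonian}), which \emph{is} coercive in $p$ for every fixed $m\neq 0$ (since $\sqrt{m^{2}+p^{2}}\to\infty$ while the drift and strain terms stay bounded) and is strictly level-set convex, as already observed; the direction $m=0$ gives $\overline H(0,n,c)=|n|$. Hence $\overline H(m,n,c)$ exists by the random homogenization theorem for coercive one-dimensional Hamiltonians of [\ref{Homogenization of general coercive H-J by Gao}] (generalizing [\ref{1-d seperable noncovex by ATY}]), and homogenization of (\ref{general strain G-eqn}) itself follows by assembling the one-dimensional correctors into $2$-d (approximate) correctors and running the perturbed test-function method. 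Below I write, for fixed $y,\omega,c$, $\{q:H(q,y,\omega,c)\le\mu\}=[q_{-}(\mu,y,\omega),q_{+}(\mu,y,\omega)]$, a closed interval (possibly empty or a point) because $H(\cdot,y,\omega,c)$ is strictly $U$-shaped.

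\emph{Part (2), lower bound.} First I would show $\overline H(m,n,c)\ge\esssup_{(y,\omega)}\min_{q}H(q,y,\omega,c)$: an approximate corrector $G^{\varepsilon}$ is Lipschitz (by coercivity) and a viscosity subsolution of $H(p+DG^{\varepsilon},y,\omega,c)\le\overline H(m,n,c)+\varepsilon$, so at each point of differentiability $\min_{q}H(q,y,\omega,c)\le\overline H(m,n,c)+\varepsilon$; this holds a.e., and ergodicity gives $\esssup\min_{q}H\le\overline H(m,n,c)+\varepsilon$ for all $\varepsilon$. A direct computation gives $\min_{q}H(q,y,\omega,c)=mv(y,\omega)+\min_{q}f_{v'}$, where $f_{a}(q):=\sqrt{m^{2}+q^{2}}+cmaq/\sqrt{m^{2}+q^{2}}$ satisfies $\min_{q}f_{a}\le f_{a}(0)=|m|$ and $\min_{q}f_{a}\to|m|$ as $a\to0$. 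It then remains to prove $\esssup_{(y,\omega)}(mv+\min_{q}f_{v'})\ge|m|+\sup_{(y,\omega)}mv$, and here the hypothesis $v(\cdot,\omega)\in C^{\infty}$ enters: since $v$ is smooth, bounded and stationary ergodic, for a.e.\ $\omega$ one can choose $y_{k}$ with $v(y_{k},\omega)\to\esssup v$ (or $\to\essinf v$, according to the sign of $m$) and $v'(y_{k},\omega)\to0$ — a bounded $C^{1}$ function approaching its supremum must have derivative tending to $0$ along a subsequence — and along such a sequence $mv(y_{k},\omega)+\min_{q}f_{v'(y_{k},\omega)}\to|m|+\sup mv$. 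This proves $\overline H(m,n,c)\ge|m|+\sup_{(y,\omega)}mv$ (and, with $c=0$, $\overline H(m,n)\ge|m|+\sup mv$).

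\emph{Part (2), slowdown, and Part (3).} These exploit the explicit $1$-d structure. Put $\mu_{0}:=\overline H(m,n)$; by the above $\mu_{0}-mv\ge|m|$, so $q_{0}:=\sqrt{(\mu_{0}-mv)^{2}-m^{2}}\ge0$ is a well-defined stationary field, and the known description of the corrector of the \emph{no-strain} shear-flow cell problem gives $\mathbf E[q_{0}]\ge n$, with equality when $\mu_{0}>|m|+\sup mv$. I may assume $n>0$ (the case $n<0$ is symmetric through the branch $q_{-}$, and $n=0$ is trivial since then $\overline H(m,0)=|m|+\sup mv$ and $q\equiv0$ is a subsolution of the strain cell problem at that level). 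Let $q_{+}:=q_{+}(\mu_{0},y,\omega)$ be the larger root of $H(\cdot,y,\omega,c)=\mu_{0}$. Substituting $\sqrt{m^{2}+q_{0}^{2}}=\mu_{0}-mv=\sqrt{m^{2}+q_{+}^{2}}+cmv'q_{+}/\sqrt{m^{2}+q_{+}^{2}}$ and simplifying yields the \emph{key identity}
\[
\bigl(q_{+}+cmv'\bigr)^{2}-q_{0}^{2}=\frac{(cmv')^{2}m^{2}}{m^{2}+q_{+}^{2}}\ \ge\ 0 ,
\]
and $q_{+}+cmv'\ge0$ because $H(-cmv',y,\omega,c)=m^{2}/\sqrt{m^{2}+(cmv')^{2}}+mv\le|m|+mv\le\mu_{0}$, so $-cmv'\in[q_{-}(\mu_{0},\cdot),q_{+}(\mu_{0},\cdot)]$. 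Hence $q_{+}\ge q_{0}-cmv'$ pointwise; since $v'$ is the $x$-derivative of a stationary function we have $\mathbf E[v']=0$, so $\mathbf E[q_{+}(\mu_{0},\cdot)]\ge\mathbf E[q_{0}]\ge n$, while $q_{-}(\mu_{0},\cdot)\le-cmv'$ forces $\mathbf E[q_{-}(\mu_{0},\cdot)]\le0\le n$. Choosing the constant $\lambda\in[0,1]$ with $\mathbf E[(1-\lambda)q_{-}(\mu_{0},\cdot)+\lambda q_{+}(\mu_{0},\cdot)]=n$, the stationary field $q:=(1-\lambda)q_{-}(\mu_{0},\cdot)+\lambda q_{+}(\mu_{0},\cdot)$ lies in $[q_{-}(\mu_{0},\cdot),q_{+}(\mu_{0},\cdot)]$, so $G(y):=\int_{0}^{y}(q-n)\,ds$ is a sublinear (by the ergodic theorem) viscosity subsolution of $H(p+DG,y,\omega,c)\le\mu_{0}$ — using level-set convexity to go from an a.e.\ subsolution to a viscosity one — whence $\overline H(m,n,c)\le\mu_{0}=\overline H(m,n)$. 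This is (2). For (3), $mv\equiv0\Rightarrow\overline H(m,n,c)=\overline H(m,n)$ is immediate since the strain term is proportional to $v'\equiv0$. Conversely, suppose $\overline H(m,n,c)=\mu_{0}$ with $\mathbf E[v]=0$ and $\mu_{0}>|m|+\sup mv$ (so $\mathbf E[q_{0}]=n$). If $v'\not\equiv0$, the key identity makes $q_{+}+cmv'-q_{0}>0$ on a set of positive measure and $\ge0$ everywhere, so $\mathbf E[q_{+}(\mu_{0},\cdot)]>n$; by continuity of $\mu\mapsto\mathbf E[q_{+}(\mu,\cdot)]$ there is $\mu_{1}\in(|m|+\sup mv,\mu_{0})$ with $\mathbf E[q_{+}(\mu_{1},\cdot)]>n$, and the same convex-combination construction at level $\mu_{1}$ gives $\overline H(m,n,c)\le\mu_{1}<\mu_{0}$, a contradiction. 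Hence $v'\equiv0$, so $v$ is a.s.\ constant, and $\mathbf E[v]=0$ forces $v\equiv0$, i.e.\ $mv\equiv0$.

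\emph{Main obstacle.} The step I expect to be most delicate is the key identity together with the elementary but crucial fact $\mathbf E[v']=0$: it is precisely the square-root form of the $G$-equation and the exact shape $cmpv'/\sqrt{m^{2}+p^{2}}$ of the strain term that make $(q_{+}+cmv')^{2}-q_{0}^{2}$ a \emph{nonnegative} multiple of $(cmv')^{2}$, and pairing this pointwise relation with the ``integration by parts'' identity $\mathbf E[v']=0$ is what turns a pointwise \emph{cancellation} of the strain into a strictly favourable \emph{average} — the slowdown. The rest is bookkeeping: the degenerate regimes $\mu_{0}=|m|+\sup mv$ and $n=0$ (where $q_{0}$ need not be the corrector but $\mathbf E[q_{0}]\ge n$ still holds, so the construction survives), the sign case $n<0$, and the justification — via level-set convexity of $H$ — that a Lipschitz field valued in the sublevel intervals is a genuine viscosity subsolution.
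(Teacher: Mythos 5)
Your proposal is essentially correct, but it proves the heart of the theorem (parts (2)–(3)) by a genuinely different route than the paper. The paper works with the general family $H=\sqrt{m^2+p^2}+cs\,p/\sqrt{m^2+p^2}+k$ under \textbf{(B1)--(B4)}, first builds exact sublinear correctors above the flat level via the stationary root fields and the functions $P_{\pm}(\mu,c)$ (Lemmas 3.2--3.9), then shows $c\mapsto h(c)=\overline{H}(p,c)$ is Lipschitz by comparison for the discounted problem, differentiates the cell identity $f(n+u')+csf'(n+u')+k=h(c)$ in $c$, takes expectations, and uses $\mathbf{E}[\partial_c u']=0$ together with $\mathbf{E}[s]=0$ and $1+as>0$, $a>0$ to get the strict inequality $h'(c)<0$ whenever $h(c)>\overline{H}_*$; part (2) then follows by integrating from $0$ to $c$ and part (3) from strictness. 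You instead fix $c$, work at the unstrained level $\mu_0=\overline{H}(p)$, and exploit the pointwise algebraic identity $(q_{+}+cmv')^{2}-q_{0}^{2}=(cmv')^{2}m^{2}/(m^{2}+q_{+}^{2})\ge 0$ (which I checked: it is correct, as is $q_{-}\le -cmv'\le q_{+}$ since $H(-cmv',y,\omega,c)\le |m|+mv\le\mu_0$), combined with $\mathbf{E}[v']=0$ and the no-strain formula $\mathbf{E}[q_0(\mu_0)]\ge n$, to produce a stationary mean-$n$ selection inside the sublevel intervals, i.e.\ a sublinear Lipschitz subcorrector at level $\mu_0$, whence $\overline{H}(p,c)\le\mu_0$; strictness of the identity when $v'\not\equiv 0$ lets you lower the level and yields (3). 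Your argument is more elementary at the crux (no differentiation in $c$, no interchange of limit and expectation for $\partial_c u'$) and isolates the slowdown mechanism in one transparent pointwise identity paired with $\mathbf{E}[mv']=0$; the paper's argument yields strictly more information that your route does not recover (Lipschitz continuity and strict monotone decrease of $c\mapsto\overline{H}(p,c)$, and the quenching threshold $\overline{c}$ of Theorem 3.10(3)), though none of that extra strength is needed for the statement as given.

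Two steps you assert should be written out, though neither is a gap in substance: (i) the implication ``sublinear Lipschitz viscosity subsolution at level $\mu\ge\overline{H}_*$ implies $\overline{H}(p,c)\le\mu$'' — this is standard and can be proved either by comparing with the discounted solution after subtracting $\epsilon\sqrt{1+y^2}$, or exactly as in the last paragraph of the paper's proof of Theorem 3.10(3) by touching your subcorrector against the exact corrector at level $\overline{H}(p,c)$ furnished by Lemma 3.9; (ii) the facts $\mathbf{E}[q_0(\mu_0)]\ge n$ (with equality above the flat level) and the continuity of $\mu\mapsto\mathbf{E}[q_{+}(\mu,\cdot)]$ used in (3) are precisely the $c=0$ and general-$c$ instances of the paper's $P_{+}$ analysis (Lemma 3.2, Proposition 3.4, Lemma 3.9), so you may simply quote that machinery rather than treat it as ``known''.
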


\begin{remark}
Statement (2) means that the strain term reduces the turbulent flame speed. Since $v'$ changes sign, this is not obvious at all. This result is consistent with concensus in combustion literature that the strain rate plays an important role in slowing down or even quenching flame propagation[\ref{ref P.Ronny}]. This fact has been observed by Xin-Yu [\ref{Xin Yu personal communication}] in the periodic setting.
\end{remark}

\section{A remark on homogenization of level-set convex Hamiltonians}\label{section 2}

In this section, we claim that random Hamilton-Jacobi equations with Hamiltonians that are merely level-set convex can be homogenized. Here $(x,t)\in\RR^{d}\times\RR$ is the space-time variable.
\subsection{Assumptions}
Consider the Hamilton-Jacobi equation
\begin{equation*}
\begin{cases}
u_{t}+H(Du,x,\omega)=0 & (x,t)\in\RR^{d}\times (0,\infty)\\
u(x,0)=u_{0}(x) & x\in\RR^{d}
\end{cases}
\end{equation*}

For $H$, we assume:\\
\textbf{(A1)} Stationary Ergodicity: There exists a probability space $\ (\Omega,\mathcal{F},\mathbf{P})$ and a group $\lbrace\tau_{y}\rbrace_{y\in\RR^{d}}$ of  $\mathcal{F}$-measurable, measure-preserving transformations $\tau_{y}:\Omega \rightarrow \Omega$, i.e. for any $x,y\in\RR^{d}$:
\begin{equation*}
\tau_{x+y}=\tau_{x}\circ \tau_{y} \text{ and } \mathbf{P}[\tau_{y}(A)]=\mathbf{P}[A]
\end{equation*}
Ergodicity:
\begin{equation*}
A\in \mathcal{F}, \ \tau_{z}(A)=A \text{ for every }z\in \RR^{d} \Rightarrow \mathbf{P}[A]\in \lbrace 0,1 \rbrace
\end{equation*}
Stationary:
\begin{equation*}
H(p,y,\tau_{z}\omega)=H(p,y+z,\omega)
\end{equation*}
\textbf{(A2)} Level-Set Convexity: For every $(y,\omega)\in \RR^{d}\times \Omega$ and $p,q\in \RR^{d}$.
\begin{equation*}
H(\frac{p+q}{2},y,\omega)\leq \max \lbrace H(p,y,\omega), H(q,y,\omega) \rbrace
\end{equation*}
\textbf{(A3)} Coercivity: 
\begin{equation*}
\lim\limits_{|p|\rightarrow \infty}\essinf\limits\limits_{(y,\omega)\in \RR^{d}\times \Omega}H(p,y,\omega)=+\infty
\end{equation*}
\textbf{(A4)} Boundedness and Uniform Continuity:
\begin{equation*}
\lbrace H(\cdot,\cdot,\omega):\omega \in \Omega \rbrace \text{ is bounded and equicontinuous on } B_{R}\times \RR^{d} \text{ for any } R>0.
\end{equation*}

\subsection{Comparison Principle for Metric Problem}

We adopt the same notations as in [\ref{ref Armstrong and Souganidis}]; by stationary ergodicity, these are independent of random variable $\omega$. So, we suppress the random variable.

\begin{notation}
\begin{equation*}
\mathcal{L}:=\lbrace \text{real-valued global Lipschitz functions in } \RR^{d}\rbrace 
\end{equation*}
\begin{equation*}
\overline{H}_{*}:=\inf\limits_{w\in\mathcal{L}}\esssup\limits\limits_{y\in\RR^{d}}H(Dw,y)
\end{equation*}
\begin{equation*}
\mathcal{S}:=\lbrace w\in \mathcal{L}:\lim\limits_{|y|\rightarrow \infty}\frac{w(y)}{|y|}= 0 \rbrace, \ 
\end{equation*}

\begin{equation*}
\widehat{H}(p):=\inf\limits_{w\in \mathcal{S}} \esssup\limits\limits_{y\in \RR^{d}}H(p+Dw,y)
\end{equation*}

\end{notation}

For fixed $x\in\RR^{d}$ and $\mu\geq \overline{H}_{*}$, we consider the metric problem
\begin{equation}\label{metric eqn}
\begin{cases}
H(p+Dv,y)=\mu & \RR^{d}\diagdown \lbrace x \rbrace \\
v(x)=0 & \\
\end{cases}
\end{equation}

The idea in [\ref{ref Armstrong and Souganidis}] to determine $\overline{H}(p)$ is to homogenize each level set of $H$. The main tool is a comparison principle(Proposition 3.1 of [\ref{ref Armstrong and Souganidis}]) of the metric problem, and the proof of the comparison principle depends on the additional assumptions (\ref{additional assumptions(1)}) and (\ref{additional assumptions(2)}). For general level-set convex Hamiltonians, we cannot prove the same comparison principle.

Since homogenization is closed under uniform limits, the following question arises: can we add a small perturbation to the level-set convex Hamiltonian such that the perturbed Hamiltonian satisfy (\ref{additional assumptions(1)}) and (\ref{additional assumptions(2)}), and then take the limit? This may work for a carefully constructed perturbation, but it does not work if we simply perturb $H(p,x,\omega)$ by $H_{\epsilon}(p,x,\omega):=\epsilon |p|^{2}+H(p,x,\omega)$ as the following simple example shows.

\begin{example}
Consider $d=1$ and $H(p,x,\omega)=H(p)$ defined by
\begin{eqnarray*}
H(p):=
\begin{cases}
-p &  p\in(-\infty,0] \\
0 &  p\in(0,1]\\
-p+1 &  p\in(1,2]\\
p-3 &  p\in (2,+\infty)
\end{cases}\\
H_{\epsilon}(p):=\epsilon |p|^{2}+H(p)
\end{eqnarray*}

Then for $0<\epsilon\ll 1$, $H_{\epsilon}(0)=0, H_{\epsilon}(1)=\epsilon>0, H_{\epsilon}(2)=4\epsilon-1<0$ violates the level-set convexity. So the perturbation may destroy the structure of level-set convexity.
\end{example}
Fortunately, we observe that by the method introduced in [\ref{ref Armstrong and Souganidis}], wherever we need the comparison principle we actually only need the following weak version comparison principle. To prove the weak version comparison principle, level-set convexity is sufficient.

\begin{lemma}[Weak Comparison Principle]
Assume $\widehat{H}(p)<\mu<\nu<+\infty$ and $u$, $-v \in USC(\RR^{d})$ solve the following equation.
\begin{equation}
H(p+Du,y)\leq \mu<\nu \leq H(p+Dv,y) \text{ in } \RR^{d}\diagdown K, \ K\subset \RR^{d} \text{ compact }
\end{equation} 
with
\begin{equation}
\liminf\limits_{|y|\rightarrow \infty}\frac{v(y)}{|y|}\geq 0
\end{equation}
Then
\begin{equation}
\sup\limits_{\RR^{d}}(u-v)=\max\limits_{K}(u-v)
\end{equation}
\end{lemma}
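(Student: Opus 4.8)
The plan is to argue by contradiction with a doubling-of-variables argument, exploiting the fact that, although level-set convexity does \emph{not} allow one to manufacture strict subsolutions (as the Example above shows, which is precisely why [\ref{ref Armstrong and Souganidis}] needs the auxiliary functions $\Lambda_{\lambda}$), the strict gap $\mu<\nu$ between the two levels is by itself enough to produce a contradiction once the relevant maximum has been localized. So suppose $\sup_{\RR^{d}}(u-v)>M:=\max_{K}(u-v)$.

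\textbf{Step 1 (a smooth sublinear corrector).} First I would use $\widehat H(p)<\mu$ to fix $\mu_{0}\in(\widehat H(p),\mu)$ and $w_{0}\in\mathcal S$ with $\esssup_{y}H(p+Dw_{0},y)\le\mu_{0}$, and then mollify: with $w:=w_{0}*\rho_{\eta}$ one has $p+Dw(y)$ equal to an average of the $p+Dw_{0}(z)$ over $|z-y|\le\eta$, so level-set convexity \textbf{(A2)} (which, being midpoint convexity together with the continuity in \textbf{(A4)}, gives closed convex sublevel sets and hence a Jensen-type inequality) yields $H(p+Dw(y),y)\le\esssup_{|z-y|\le\eta}H(p+Dw_{0}(z),y)$, and \textbf{(A4)} then gives $H(p+Dw,y)\le\mu_{0}+\omega_{R}(\eta)<\mu$ for $\eta$ small. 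Thus I may assume $w\in C^{\infty}(\RR^{d})$ is sublinear, globally Lipschitz, and $H(p+Dw,y)\le\mu_{0}<\mu$ for every $y$.

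\textbf{Step 2 (a priori bounds and localization of the maximum).} By coercivity \textbf{(A3)} and \textbf{(A4)}, every viscosity subsolution of $H(p+D\cdot,\cdot)\le\mu$ on $\RR^{d}\sm K$ is globally Lipschitz with a constant $L=L(\mu,p)$; in particular $u$ is Lipschitz, so $u(y)\le C+L|y|$, and together with $\liminf_{|y|\to\infty}v(y)/|y|\ge0$ this makes $u-v$ grow at most linearly. The key point — and the only place where $\widehat H(p)<\mu$ is genuinely used — is to upgrade this to
\[
\limsup_{|y|\to\infty}\bigl(u(y)-v(y)\bigr)\le M ,
\]
which I would prove by a comparison argument near infinity in which the sublinear corrector $w$ acts as a global barrier and the strict separation $\mu<\nu$ supplies the needed strict inequality. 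Combined with the upper semicontinuity of $u-v$, the displayed bound forces the supremum $\sup_{\RR^{d}}(u-v)>M$ to be attained at some $\bar y\in\RR^{d}\sm K$ (or, if one prefers not to discuss attainment, a maximizing sequence can be confined to a fixed ball via a vanishing penalization $-\beta\langle y\rangle$, admissible precisely because of the displayed $\limsup$).

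\textbf{Step 3 (the interior contradiction).} For $\epsilon>0$ let $(x_{\epsilon},y_{\epsilon})$ maximize $(x,y)\mapsto u(x)-v(y)-\tfrac1{2\epsilon}|x-y|^{2}$ over a small fixed ball around $\bar y$; standard estimates give that for small $\epsilon$ this is an interior maximum with $(x_{\epsilon},y_{\epsilon})\to(\bar y,\bar y)$ in $\RR^{d}\sm K$, and the Lipschitz bound on $u$ gives $\tfrac1{2\epsilon}|x_{\epsilon}-y_{\epsilon}|^{2}\le u(x_{\epsilon})-u(y_{\epsilon})\le L|x_{\epsilon}-y_{\epsilon}|$, so $q_{\epsilon}:=p+\epsilon^{-1}(x_{\epsilon}-y_{\epsilon})$ stays bounded; pass to $q_{\epsilon}\to q_{*}$. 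The subsolution inequality for $u$ at $x_{\epsilon}$ with test gradient $\epsilon^{-1}(x_{\epsilon}-y_{\epsilon})$ gives $H(q_{\epsilon},x_{\epsilon})\le\mu$, the supersolution inequality for $v$ at $y_{\epsilon}$ with the same test gradient gives $H(q_{\epsilon},y_{\epsilon})\ge\nu$, and letting $\epsilon\to0$ and using \textbf{(A4)} yields $\mu\ge H(q_{*},\bar y)\ge\nu$, contradicting $\mu<\nu$. Hence $\sup_{\RR^{d}}(u-v)=M$.

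I expect Steps 1 and 3 to be routine; \textbf{the main obstacle is Step 2}, i.e.\ ruling out that a maximum of $u-v$ escapes to infinity. Because level-set convexity does not let one lower $u$ to a strict subsolution at level $\mu-\delta$, the argument cannot be decoupled in the usual way; the only available replacement is the sublinear corrector supplied by $\widehat H(p)<\mu$, used together with the strict gap $\mu<\nu$. This is exactly the step that, in [\ref{ref Armstrong and Souganidis}], is handled instead through the auxiliary functions $\Lambda_{\lambda}$.
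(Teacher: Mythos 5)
Your Steps 1 and 3 are essentially routine (and the mollification in Step 1 is a reasonable way to make precise a point the paper passes over silently), but the proposal has a genuine gap exactly where you flag it: Step 2 is not an argument, it is a restatement of the lemma. Once $\limsup_{|y|\to\infty}\bigl(u(y)-v(y)\bigr)\le\max_{K}(u-v)$ is known, the conclusion follows by short standard work, so the phrase ``a comparison argument near infinity in which the sublinear corrector $w$ acts as a global barrier'' is precisely what has to be produced, and it is not clear it can be produced in that form: $u$ may grow linearly at some positive rate $a=\limsup_{|y|\to\infty}u(y)/|y|$, while all that is assumed about $v$ is $\liminf_{|y|\to\infty}v(y)/|y|\ge 0$, so there is no way to dominate $u$ by $v$ plus a sublinear barrier near infinity before one has compared these growth rates --- and that comparison is the real content of the lemma.

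The paper's proof supplies exactly the missing mechanism, and it uses level-set convexity in a way that is absent from your outline. First, using the gap $\mu<\nu$ together with \textbf{(A3)}--\textbf{(A4)}, it replaces $v$ by $\widetilde v=v+\epsilon\phi_{R}$, $\phi_{R}(y)=\sqrt{R^{2}+|y|^{2}}-R$, which is still a supersolution at a level above $\mu$ and now has $\liminf_{|y|\to\infty}\widetilde v/|y|\ge\epsilon>0$. Second --- the key step --- it forms convex combinations $\widetilde u=(\lambda+\delta)u+(1-\lambda-\delta)w$ with the sublinear corrector $w$ coming from $\widehat H(p)<\mu$; by \textbf{(A2)} these are again subsolutions at level $\mu$, but their growth rate at infinity is only $(\lambda+\delta)a$. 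One then bootstraps on $\lambda$: if $\liminf_{|y|\to\infty}\bigl(v-\lambda u\bigr)/|y|\ge 0$, then for $\delta<\epsilon/(2a)$ the difference $\widetilde u-\widetilde v$ localizes in a bounded set, comparison on bounded domains gives $\widetilde u-\widetilde v\le\max_{K}(\widetilde u-\widetilde v)$, and letting $R\to\infty$ yields $\liminf_{|y|\to\infty}\bigl(v-(\lambda+\delta)u\bigr)/|y|\ge 0$; hence the set of admissible $\lambda$ is all of $[0,1]$. The lemma then follows by applying the same bounded-domain comparison to $\lambda u+(1-\lambda)w$ and letting $\lambda\to1$ (no doubling of variables is needed). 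Without this convex-combination bootstrap, or an equivalent device, your Step 2 remains unproved and the proposal does not establish the lemma; I would rewrite Step 2 along these lines, after which your Step 3 becomes optional.
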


\begin{remark}
Without loss of generality, we only consider the case $p=0$.\\

\textbf{(A4)} and $\mu<\nu$ $\Rightarrow$ by adding a function with arbitrary small gradient to $v$, we can assume without of loss of generality that:
\begin{equation}
A:=\liminf\limits_{|y|\rightarrow \infty}\frac{v(y)}{|y|}>0
\end{equation}

\textbf{(A3)} $\Rightarrow$ $u$ is Lipschitz $\Rightarrow$ $\exists \ a:=\limsup\limits_{|y|\rightarrow \infty}\frac{u(y)}{|y|}<\infty$
\end{remark}

\begin{claim}\label{claim 1}
$A\geq a.$
\end{claim}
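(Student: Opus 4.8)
I would prove $A\ge a$ by contradiction, assuming $a>A$, and fix $\gamma$ with $A<\gamma<a$. First I would localise the growth of $u$ to a single direction: by the definition of $a$ as a $\limsup$ there are points $y_{k}$ with $|y_{k}|\to\infty$ and $u(y_{k})\ge \gamma\,|y_{k}|$, and, after extracting a subsequence, $y_{k}/|y_{k}|\to e$ for some $e\in\partial B_{1}$. The aim is to show that a subsolution of $H(Du,y)\le\mu$ on $\RR^{d}\setminus K$ cannot keep pace, in the direction $e$, with a supersolution of $H(Dv,y)\ge\nu$, the point being the strict gap $\mu<\nu$.

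The tool I would use is a one-sided comparison on the annuli $U_{R}:=B_{R}\setminus K$. On $U_{R}$ we have $H(Du,y)\le\mu$ and $H(Dv,y)\ge\nu$, and since the two right-hand sides are separated by the fixed gap $\nu-\mu>0$ the ordinary doubling-of-variables argument already closes there: the test slopes are bounded using coercivity \textbf{(A3)}, and the spatial increments are absorbed by the equicontinuity \textbf{(A4)}, so no boundary-layer term and no convex-combination device are required. (This is precisely the place where Proposition~3.1 of [\ref{ref Armstrong and Souganidis}] invokes the auxiliary functions $\Lambda_{\lambda}$; for the weak statement we do without them.) One thus obtains, for every large $R$,
\begin{equation*}
\sup_{U_{R}}(u-v)=\max\Big\{\,\max_{\partial K}(u-v),\ \max_{\partial B_{R}}(u-v)\,\Big\},
\end{equation*}
and, since for small $\delta>0$ the function $v-\delta\,e\cdot y$ is again a supersolution of $H(D(\cdot),y)\ge\nu'$ on $U_{R}$ for some $\nu'\in(\mu,\nu)$ (by \textbf{(A3)}–\textbf{(A4)}, a gradient of size $\delta$ shifts the level by at most a modulus $\omega_{H}(\delta)$), the same identity holds with $v$ replaced by $v-\delta\,e\cdot y$. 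I would then feed in the sequence $y_{k}$, taking $R$ slightly larger than $|y_{k}|$, together with the one-sided bounds $u\le(a+o(1))R$ and $v\ge(A-o(1))R$ on $\partial B_{R}$ and $v(y_{k})\ge(A-o(1))|y_{k}|$, and send $k\to\infty$, then $\delta\to0$ and $R/|y_{k}|\to1$, to reach a contradiction with $u(y_{k})\ge\gamma|y_{k}|$ and $\gamma>A$.

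The step I expect to be the real obstacle is exactly this last limit. Only $\limsup$/$\liminf$ information on $u$ and $v$ is available, not genuine limits; the point $y_{k}$ witnessing the growth of $u$ lies on the boundary of the natural annulus rather than in its interior; and one must take the outer radius $R$ close enough to $|y_{k}|$ that the contribution of $\partial B_{R}$ does not by itself absorb the whole gap $\gamma-A$. Turning the inequality $\mu<\nu$ into a quantitative gain — which is what the slope-$\delta$ correction provides — and arranging the direction $e$ so that $v$ really does not exceed rate $A$ there, is where the reductions recorded in the Remark (adding a small gradient to $v$, global Lipschitz continuity of $u$) and the hypothesis $\widehat H(p)<\mu$ have to be used with care.
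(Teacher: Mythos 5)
Your scheme does not produce a contradiction, for two concrete reasons. First, $a$ and $A$ are a $\limsup$ of $u$ and a $\liminf$ of $v$, realized along possibly unrelated sequences: at the points $y_{k}$ where $u(y_{k})\ge\gamma|y_{k}|$ you only control $v$ from \emph{below} ($v(y_k)\ge (A-o(1))|y_k|$), so you have no upper bound on $v(y_{k})$ and hence no lower bound on $u(y_{k})-v(y_{k})$; there is simply no interior excess to test against the comparison inequality. Second, even if you could find points where simultaneously $u\approx a|y|$ and $v\approx A|y|$, the annulus comparison only yields $\sup_{U_R}(u-v)\le\max\{\max_{\partial K}(u-v),\max_{\partial B_R}(u-v)\}$, and the only available bound on the outer sphere is $(a-A+o(1))R$; since $\gamma<a$, this exceeds the putative interior excess $(\gamma-A)|y_{k}|$ for every admissible $R>|y_{k}|$, so the outer boundary term absorbs everything no matter how close $R/|y_k|$ is to $1$ — this is exactly the obstacle you flag, and shrinking $R$ does not remove it. The tilt $v-\delta e\cdot y$ makes matters worse, since it lowers $v$ precisely in the direction of $y_{k}$, inflating $\max_{\partial B_R}$ by about $\delta R$; moreover \textbf{(A4)} gives equicontinuity of $H$ only on bounded $p$-sets, and supersolution test gradients are not bounded by coercivity, so "a gradient of size $\delta$ shifts the level by $\omega_H(\delta)$" is itself unjustified as stated (the paper perturbs by functions with small gradient too, but that is the least of the problems here).

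The deeper issue is that you explicitly discard the two hypotheses the claim actually runs on: $\widehat H(0)<\mu$ and level-set convexity \textbf{(A2)}. The paper's proof is a continuation argument in $\lambda$ for $I=\{\lambda\in[0,1]:A\ge\lambda a\}$: it takes the \emph{sublinear} strict subsolution $w\in\mathcal S$ with $H(Dw,y)<\mu$ (this is where $\widehat H(0)<\mu$ enters), forms $\widetilde u=(\lambda+\delta)u+(1-\lambda-\delta)w$, which by \textbf{(A2)} is still a subsolution at level $\mu$ but grows only like $(\lambda+\delta)a$ at infinity, and compares it with $\widetilde v=v+\epsilon\phi_R$, a supersolution above level $\mu$ whose extra growth $\approx\epsilon|y|$ forces the maximum of $\widetilde u-\widetilde v$ into a bounded set; only then is the bounded-domain comparison (the part of your argument that is fine) applied, and the relevant boundary is $K$, not a large sphere. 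That interpolation with $w$ is precisely the mechanism that converts the level gap $\mu<\nu$ into control of growth at infinity, step by step in $\lambda$; dropping it, as you propose ("no convex-combination device required"), leaves you attempting a direct comparison of $u$ against $v$ across an unbounded region with a genuine growth mismatch, which is what cannot be done for general level-set convex $H$ and is the reason the lemma is stated and proved in this weak, interpolated form.
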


\begin{proof}[Proof of the Claim] It suffices to prove $I:=\lbrace 0\leq \lambda \leq 1: A\geq \lambda a \rbrace=[0,1]$. If $a\leq 0$, there is nothing to prove. Assume $a>0$ and let $s:=\sup I$. Since $I$ is closed, $I=[0,s]$. It suffices to show that for any $\lambda \in I\cap[0,1)$, $\exists \ 0<\delta\ll 1$ such that $\lambda + \delta \in I.$

By the assumption $\widehat{H}(0)<\mu$, we can choose $w\in\mathcal{S}$, such that
\begin{equation*}
H(Dw,y)<\mu
\end{equation*}
Fix $R$, $\epsilon>0$, let $0<\delta<\min\lbrace \frac{\epsilon}{2a},1-\lambda\rbrace$, and define
\begin{equation*}
\phi_{R}(y):=\sqrt{R^{2}+|y|^{2}}-R
\end{equation*}
Define
\begin{equation*}
\widetilde{u}:=(\lambda+\delta)u+(1-\lambda-\delta)w \text{ and } \widetilde{v}:=v+\epsilon \phi_{R}
\end{equation*}
Then by level-set convexity,
\begin{eqnarray*}
H(D\widetilde{u},y)&=& H((\lambda+\delta)Du+(1-\lambda-\delta)Dw,y)\\
&\leq& \max \lbrace H(Du,y),H(Dw,y) \rbrace\\
&\leq& \mu
\end{eqnarray*}
By \textbf{(A4)}, if we choose $\epsilon\ll 1$, we have
\begin{equation*}
\mu<H(D\widetilde{v},y)
\end{equation*}
\begin{eqnarray*}
\liminf\limits_{|y|\rightarrow \infty}\frac{\widetilde{v}-\widetilde{u}}{|y|}&=&\liminf\limits_{|y|\rightarrow \infty}\frac{(v+\epsilon \phi_{R})-\big[(\lambda+\delta)u+(1-\lambda-\delta)w\big]}{|y|}\\
&=&\liminf\limits_{|y|\rightarrow \infty} \Bigg [ \frac{v-\lambda u}{|y|}+\epsilon \frac{\phi_{R}}{|y|}-\delta \frac{u}{|y|}-\frac{(1-\lambda-\delta)w}{|y|} \Bigg ]\\
&\geq& 0+\epsilon-\delta \cdot a-0\\
&\geq& \epsilon- \frac{\epsilon}{2a} \cdot a\\
&=&\frac{\epsilon}{2}
\end{eqnarray*}
So, $\widetilde{u}-\widetilde{v}$ attains its maximum in a bounded domain. And by the comparison principle in bounded domain.

\begin{equation*}
\widetilde{u}-\widetilde{v}\leq \max_{K}( \widetilde{u}-\widetilde{v} )
\end{equation*}

Let $R\rightarrow \infty$
\begin{equation*}
\widetilde{u}-v\leq \max_{K}( \widetilde{u}-v )
\end{equation*}

Which leads to 
\begin{equation}
(\lambda+\delta)u+(1-\lambda-\delta)w-v\leq \max\limits_{K}\big((\lambda+\delta)u+(1-\lambda-\delta)w-v\big)
\end{equation}
So,
\begin{eqnarray*}
\liminf\limits_{|y|\rightarrow \infty} \frac{v-(\lambda+\delta)u}{|y|}&=&\liminf\limits_{|y|\rightarrow \infty} \frac{v-[(\lambda+\delta)u+(1-\lambda-\delta)w]}{|y|}\\
&=&\liminf\limits_{|y|\rightarrow \infty} \frac{v-\widetilde{u}}{|y|}\\
&\geq&\liminf_{|y|\rightarrow \infty}\frac{-\min\limits_{K}(v-\widetilde{u})}{|y|}\\
&=& 0
\end{eqnarray*}

So, $I=[0,1]$, i.e. $A\geq a.$
 
\end{proof}

\begin{proof}[Proof of Lemma 2.3]
Since $I=[0,1]$, by argument similar to the one above, for any $\lambda\in[0,1)$
\begin{equation}
\lambda u+(1-\lambda)w-v\leq \max\limits_{K}\big(\lambda u+(1-\lambda)w-v\big)
\end{equation}

Letting $\lambda\rightarrow 1$, we get the lemma.
\end{proof}

\begin{remark}
The following statements give the homogenization of general level-set convex Hamiltonians under \textbf{(A1)-(A4)}.\\
\textbf{(I)} In [\ref{ref Armstrong and Souganidis}], the assumptions (\ref{additional assumptions(1)}), (\ref{additional assumptions(2)}) are only used to derive the comparison principle(Proposition 3.1 of [\ref{ref Armstrong and Souganidis}]).\\
\textbf{(II)} The above proof of the weak comparison principle does not require (\ref{additional assumptions(1)}), (\ref{additional assumptions(2)}).\\
\textbf{(III)} The weak comparison principle is sufficient to obtain the homogenization. Actually, in [1], the comparison principle is used mainly in two places. One is the construction of a maximal solution of the metric problem, the other is the proof of homogenization where the comparison principle is used to control the convergence in the approximated cell problem. Wherever we need the comparison principle, the above weak version comparison principle is sufficient.
\end{remark}

\section{The effect of strain term}\label{section 3}

We will study the following more general Hamiltonians with \textbf{(B1)-(B4)}.
\begin{equation}\label{cell problem with strain term}
H(p,x,\omega,c)=\sqrt{m^{2}+p^{2}}+cs(x,\omega)\cdot \frac{p}{\sqrt{m^{2}+p^{2}}}+k(x,\omega)
\end{equation}
\textbf{(B1)} Fix $\omega\in\Omega$, $k(x,\omega)$, $s(x,\omega)\in  C^{\infty}(\RR)$ and $k(x,\omega)$, $s(x,\omega)\in L^{\infty}(\RR\times \Omega).$\\ 
\textbf{(B2)} Fix $\omega\in\Omega$, if $k(x,\omega)$ achieves its local maximum value, then $s(x,\omega)=0.$\\ 
\textbf{(B3)} The event $\lbrace \omega\in\Omega: k(x,\omega) \text{ or } s(x,\omega) \text{ is constant}\rbrace$ is not of probability 1.\\
\textbf{(B4)} $k(x,\omega)$ and $s(x,\omega)$ are stationary. For any $x\in\RR$, $\mathbf{E}[s(x,\omega)]=0$. 

\begin{remark}\label{the remark with 7 components}

(1) The strain G-equation is a special case with $k=mv$ and $s=mv'$.\\

(2) We keep the existence of $\lbrace \tau_{z} \rbrace_{z\in\RR}$, which is ergodic. By this fact and \textbf{(B1)-(B4)}, without loss of generality, we can assume there are $\underline{k}<\overline{k}$, $\underline{s}<0<\overline{s}$, such that for all $\omega\in\Omega,$
\begin{equation*}
\inf\limits_{x\in\RR} k(x,\omega)=\underline{k},\ \sup\limits_{x\in\RR} k(x,\omega)=\overline{k},\
\inf\limits_{x\in\RR} s(x,\omega)=\underline{s},\ \sup\limits_{x\in\RR} s(x,\omega)=\overline{s}
\end{equation*}\\

(3) Since $H(\cdot,x,\omega,c)$ is level-set convex and $\min\limits_{p\in\RR}H(p,x,\omega,c)\leq |m|+\overline{k}$, there exist $ p_{-}(x,\omega,c)\leq p_{+}(x,\omega,c)$ and $p_{-}$, $p_{+}$ that are continuous functions of $x$ with
\begin{equation*}
\lbrace p: H(p,x,\omega,c)>|m|+\overline{k} \rbrace=(-\infty,p_{-}(x,\omega,c))\cup (p_{+}(x,\omega,c),\infty)
\end{equation*}\\

(4) By the homogenization result in [\ref{ref Armstrong and Souganidis}], the following is true: for a.e. $\omega\in\Omega$ and any $\delta>0$, if $u^{\delta}$ is the unique viscosity solution of
\begin{equation*}
\delta u^{\delta}+H(p+(u^{\delta})',x,\omega,c)=0, 
\end{equation*}
then we have
\begin{equation}\label{convergence in the minimum piece}
\lim\limits_{\delta\rightarrow 0}-\delta u^{\delta}(0,\omega)=\overline{H}(p,c).
\end{equation}
Without loss of generality, we can assume this statement is true for every $\omega\in\Omega$.\\

(5) It is easy to see $\overline{H}_{*}=\min\limits_{p\in\RR}\overline{H}(p,c)=|m|+\overline{k}$. By level-set convexity of $\overline{H}(p,c)$, there exist $\overline{p}_{-}(c)\leq \overline{p}_{+}(c)$ with
\begin{equation*}
\lbrace p: \overline{H}(p,c)>|m|+\overline{k}\rbrace=(-\infty,\overline{p}_{-}(c))\cup (\overline{p}_{+}(c),\infty)
\end{equation*}\\

(6) We will show (from Lemma \ref{P_{+}} to Lemma \ref{key}) that if $\overline{H}(p,c)>\overline{H}_{*}$, for fixed $\omega,c$, the following cell problem has a sub-linear solution $\gamma(x,\omega,c)$.
\begin{equation}\label{The equation of cell problem}
H(p+\gamma',x,\omega,c)=\overline{H}(p,c)
\end{equation}\\

(7) Cell problems (\ref{The equation of cell problem}) do not have solutions (see [\ref{Lions Souganidis corrector}]) in general. Here in the 1-dimensional level-set convex setting, the above remark (6) says for those $\overline{H}(p,c)>\overline{H}_{*}$, cell problems do have solutions. More generally, in the 1-dimensional coercive situation, if $\overline{H}(p)$ is not a local extreme value, the solution of the cell problem at $p\in\RR$ always exists (see [\ref{1-d seperable noncovex by ATY}]). As for those $\overline{H}(p,c)=\overline{H}_{*}$, the identity (\ref{convergence in the minimum piece}) can be obtained by using comparison principle (see [\ref{1-d seperable noncovex by ATY}]).  \\

(8) From Lemma \ref{P_{+}} to Lemma \ref{key}, we always fix $c$. In Theorem \ref{main theorem}, we fix $p=(m,n)$ and study how $\overline{H}(p,c)$ depends on $c$.

\end{remark}
\begin{lemma}\label{P_{+}}
Fix $c\in[0,\infty)$. For any $\mu\in(\overline{H}_{*},\infty)$, there exists a unique $ P_{+}(\mu,c)$, such that for each $\omega$, the equation
\begin{equation*}
\begin{cases}
H(P_{+}(\mu,c)+\gamma'(x,\omega,c),x,\omega,c)=\mu \\ P_{+}(\mu,c)+\gamma'(x,\omega,c)> p_{+}(x,\omega,c)
\end{cases}
\end{equation*}

admits a viscosity solution $\gamma(x,\omega,c)$ and for a.e. $\omega\in\Omega$, $\gamma(x,\omega,c)$ is sub-linear.
\end{lemma}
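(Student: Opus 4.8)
The plan is to exploit the one-dimensional, ``V-shaped'' structure of $p\mapsto H(p,x,\omega,c)$: at each $x$ one selects the relevant branch of the level set $\{H(\cdot,x,\omega,c)=\mu\}$, and then builds $\gamma$ by integrating the derivative that this branch forces upon it. Fix $c$ and $\mu\in(\overline{H}_*,\infty)$, so $\mu>|m|+\overline k$. As in the computation displayed just after (\ref{1-d Hamiltonian}), applied now to (\ref{cell problem with strain term}), the numerator of $\partial_p H(p,x,\omega,c)$ is $p^{3}+m^{2}p+c\,s(x,\omega)\,m^{2}$, which is strictly increasing in $p$; hence $p\mapsto H(p,x,\omega,c)$ is strictly decreasing and then strictly increasing, with a unique minimizer $p_0(x,\omega,c)$. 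Since $H(0,x,\omega,c)=|m|+k(x,\omega)\le|m|+\overline k$, we get $p_0\in[p_-(x,\omega,c),p_+(x,\omega,c)]$ (notation of Remark \ref{the remark with 7 components}(3)), so $H(\cdot,x,\omega,c)$ is strictly increasing on $(p_+(x,\omega,c),\infty)$ and maps it bijectively onto $(|m|+\overline k,\infty)\ni\mu$; therefore
\begin{equation*}
g_+(x,\omega,c,\mu):=\sup\{q\in\RR:\ H(q,x,\omega,c)\le\mu\}
\end{equation*}
is well defined, and it is the unique $q>p_+(x,\omega,c)$ with $H(q,x,\omega,c)=\mu$.

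Next I would record the properties of $g_+$. From $|c\,s(x,\omega)\,p/\sqrt{m^{2}+p^{2}}|\le c\|s\|_{L^\infty}$ and coercivity we have $|q|-c\|s\|_{L^\infty}+\underline k\le H(q,x,\omega,c)$, so $g_+$ is bounded \emph{uniformly} in $(x,\omega)$. Because $g_+>p_+\ge p_0$ we have $\partial_p H(g_+,x,\omega,c)>0$, so the implicit function theorem (using $k(\cdot,\omega),s(\cdot,\omega)\in C^{\infty}(\RR)$ and $m\ne0$) gives $g_+(\cdot,\omega,c,\mu)\in C^{\infty}(\RR)$; the supremum formula makes $g_+$ $\mathcal{F}$-measurable in $\omega$, and stationarity \textbf{(B4)} together with uniqueness gives $g_+(x,\omega,c,\mu)=g_+(0,\tau_x\omega,c,\mu)$. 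Hence $g_+(0,\cdot,c,\mu)\in L^{\infty}(\Omega)\subset L^{1}(\Omega)$, and I set
\begin{equation*}
P_+(\mu,c):=\mathbf{E}\big[g_+(0,\cdot,c,\mu)\big],\qquad \gamma(x,\omega,c):=\int_0^{x}\big(g_+(t,\omega,c,\mu)-P_+(\mu,c)\big)\,\dd t .
\end{equation*}
Then for every $\omega$ one has, pointwise, $P_+(\mu,c)+\gamma'(x,\omega,c)=g_+(x,\omega,c,\mu)>p_+(x,\omega,c)$ and $H(P_+(\mu,c)+\gamma',x,\omega,c)=\mu$; since $\gamma(\cdot,\omega,c)\in C^{\infty}(\RR)$ it is in particular a viscosity solution of the system. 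For sub-linearity I would apply the continuous-parameter Birkhoff theorem to the stationary field $t\mapsto g_+(0,\tau_t\omega,c,\mu)$ in both time directions: for a.e.\ $\omega$, $x^{-1}\int_0^x g_+(t,\omega,c,\mu)\,\dd t\to P_+(\mu,c)$ as $x\to\pm\infty$, hence $\gamma(x,\omega,c)/|x|\to0$.

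For uniqueness of $P_+(\mu,c)$, suppose $P\in\RR$ is such that, for $\omega$ in a full-measure set, the system with $P$ in place of $P_+(\mu,c)$ has a sub-linear viscosity solution $\widetilde\gamma$. Coercivity forces $\widetilde\gamma(\cdot,\omega,c)$ to be locally Lipschitz, hence (by the standard fact that a locally Lipschitz viscosity solution of a continuous first-order equation solves it a.e.) differentiable a.e.\ with $H(P+\widetilde\gamma'(x),x,\omega,c)=\mu$ for a.e.\ $x$; together with the constraint $P+\widetilde\gamma'>p_+(x,\omega,c)$ and the uniqueness recorded in the first paragraph, this forces $\widetilde\gamma'(x)=g_+(x,\omega,c,\mu)-P$ a.e., so $\widetilde\gamma(x)=\widetilde\gamma(0)+\int_0^x g_+(t,\omega,c,\mu)\,\dd t-Px$ by absolute continuity. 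Dividing by $x$, letting $x\to\infty$, and invoking sub-linearity and Birkhoff as above yields $P=\mathbf{E}[g_+(0,\cdot,c,\mu)]=P_+(\mu,c)$. The hard part will be the bundle of uniform structural facts in the second paragraph — that the ``V''-shape and the ordering $p_0\le p_+$ persist uniformly in $(x,\omega)$, so that $g_+$ is uniformly bounded, smooth in $x$, and measurable in $\omega$ — together with the regularity input used in the uniqueness step, namely that a sub-linear viscosity solution of the \emph{constrained} problem is Lipschitz and hence an a.e.\ classical solution; once these are in place, the ergodic theorem finishes both existence and uniqueness.
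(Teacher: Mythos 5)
Your proposal is correct and follows essentially the same route as the paper: select pointwise the unique root of $H(\cdot,x,\omega,c)=\mu$ lying above $p_{+}(x,\omega,c)$, note it is stationary and uniformly bounded, define $P_{+}(\mu,c)$ as its expectation, and obtain sub-linearity of $\gamma$ from the ergodic theorem. Your only additions are extra detail (implicit function theorem for smoothness, the uniform bound, and an explicit uniqueness argument via the a.e.\ characterization of Lipschitz viscosity solutions) where the paper simply asserts the corresponding facts.
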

\begin{proof}
For each $\mu>\overline{H}_{*}$, consider the equation
\begin{equation*}
H(u'(x,\omega,c),x,\omega,c)=\mu
\end{equation*}

By the fact that $\min\limits_{p\in\RR}H(p,x,\omega,c)\leq \overline{H}_{*}$ and $H(p,x,\omega,c)$ is strictly level set convex. There are exactly two solutions of $u'(x,\omega,c)$, one is less than $p_{-}(x,\omega,c)$, the other is greater than $p_{+}(x,\omega,c)$. We choose the latter one, by stationary of $H$, $u'(x,\omega,c)$ is stationary. By smoothness of $H(\cdot,\cdot,\omega,c)$ and $\mu>\overline{H}_{*}$, $u'(x,\omega,c)$ is smooth with respect to $x$, so by continuity, we always have that $u'(x,\omega,c)>p_{+}(x,\omega,c)$.

Since $H(p,x,\omega,c)$ is coercive with respect to $p$, uniformly with respect to $x\in\RR$, $u'(x,\omega,c)$ is bounded. We can define
\begin{equation*}
P_{+}(\mu,c):=\mathbf{E}[u'(x,\omega,c)]
\end{equation*}

Due to the stationary of $u'$, the expectation is independent of $x$ and is uniquely defined for each $c\geq 0$ and $\mu>\overline{H}_{*}.$

Then we define the function
\begin{equation*}
\gamma(x,\omega,c):=u(x,\omega,c)-P_{+}(\mu,c)\cdot x
\end{equation*}

Then $\mathbf{E}[\gamma'(x,\omega,c)]=0$ and by sub-additive Ergodic Theorem, for a.e. $\omega\in\Omega,$
\begin{equation*}
\lim\limits_{|x|\rightarrow \infty}\frac{\gamma(x,\omega,c)-\gamma(0,\omega,c)}{|x|}=\lim\limits_{|x|\rightarrow \infty}\frac{1}{|x|}\int_{0}^{x}\gamma'(s,\omega,c)ds=\mathbf{E}[\gamma'(0,\omega,c)]=0
\end{equation*}

Hence for a.e. $\omega\in\Omega,$ $\gamma(x,\omega,c)$ is sub-linear and this is the desired solution.
\end{proof}

By the same argument, we have:

\begin{lemma}
Fix $c\in[0,\infty)$. For any $\mu\in(\overline{H}_{*},\infty)$, there exists a unique $ P_{-}(\mu,c)$, such that for each $\omega$, the equation
\begin{equation*}
\begin{cases}
H(P_{-}(\mu,c)+\gamma'(x,\omega,c),x,\omega,c)=\mu \\ P_{-}(\mu,c)+\gamma'(x,\omega,c)< p_{-}(x,\omega,c)
\end{cases}
\end{equation*}

admits a viscosity solution $\gamma(x,\omega,c)$ and for a.e. $\omega\in\Omega$, $\gamma(x,\omega,c)$ is sub-linear.
\end{lemma}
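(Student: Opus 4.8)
The plan is to mirror the proof of Lemma~\ref{P_{+}} essentially verbatim, simply selecting the \emph{lower} of the two branches of solutions of the pointwise equation $H(u',x,\omega,c)=\mu$ in place of the upper one. Fix $c\ge 0$ and $\mu\in(\overline{H}_{*},\infty)$. By Remark~\ref{the remark with 7 components}(3), for each $(x,\omega)$ the set $\{q:H(q,x,\omega,c)>|m|+\overline{k}\}$ is the disjoint union of $(-\infty,p_{-}(x,\omega,c))$ and $(p_{+}(x,\omega,c),\infty)$; since $\mu>\overline{H}_{*}\ge |m|+\overline{k}=\min_{q}H(q,x,\omega,c)$ and $H(\cdot,x,\omega,c)$ is strictly level-set convex, the equation $H(q,x,\omega,c)=\mu$ has exactly two real roots in $q$, one strictly below $p_{-}(x,\omega,c)$ and one strictly above $p_{+}(x,\omega,c)$. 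First I would pick $u'(x,\omega,c)$ to be the root lying below $p_{-}(x,\omega,c)$. By stationarity of $H$ this selection is stationary in $x$; by the smoothness of $H(\cdot,\cdot,\omega,c)$ in $x$ together with the strict inequality $\mu>\overline{H}_{*}$, the implicit function theorem makes $u'(\cdot,\omega,c)$ smooth, and hence by continuity it remains strictly below the continuous function $p_{-}(\cdot,\omega,c)$ everywhere.

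Coercivity of $H$ in $p$, uniform in $x$, bounds $|u'(x,\omega,c)|$, so I can set $P_{-}(\mu,c):=\mathbf{E}[u'(x,\omega,c)]$; by stationarity this is independent of $x$, and it is uniquely determined because the lower root $u'$ is itself pointwise uniquely determined. Putting $\gamma(x,\omega,c):=u(x,\omega,c)-P_{-}(\mu,c)\,x$ gives $\mathbf{E}[\gamma'(x,\omega,c)]=0$, so the subadditive ergodic theorem yields, for a.e.\ $\omega$,
\[
\lim_{|x|\to\infty}\frac{\gamma(x,\omega,c)-\gamma(0,\omega,c)}{|x|}
=\lim_{|x|\to\infty}\frac{1}{|x|}\int_{0}^{x}\gamma'(s,\omega,c)\,ds
=\mathbf{E}[\gamma'(0,\omega,c)]=0,
\]
i.e.\ $\gamma(\cdot,\omega,c)$ is sub-linear. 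By construction $P_{-}(\mu,c)+\gamma'=u'<p_{-}$ solves $H(\,\cdot\,)=\mu$, and since $\gamma$ is in fact smooth it is a classical, hence viscosity, solution. Uniqueness of $P_{-}(\mu,c)$ as the only constant admitting such a sub-linear corrector on the lower branch then follows: any such $\gamma$ must satisfy $\gamma'=u'-P$ pointwise, and sub-linearity combined with stationarity of $\gamma'$ forces $\mathbf{E}[\gamma']=0$, hence $P=\mathbf{E}[u']=P_{-}(\mu,c)$.

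The only step that deserves attention is the one shared with Lemma~\ref{P_{+}}: checking that the lower branch is a globally defined, smooth, strictly sub-$p_{-}$ function of $x$. This is guaranteed by strict level-set convexity (the root is simple and isolated), by the quantitative gap $\mu>\overline{H}_{*}=\min_{q}H$ (so one never reaches the value where the two branches coalesce), and by the assumed $C^{\infty}$ dependence of $k$ and $s$ on $x$. Everything else is literally the computation in the proof of Lemma~\ref{P_{+}}, which is why I would phrase the proof simply as ``identical to that of Lemma~\ref{P_{+}}, choosing at each $x$ the root below $p_{-}(x,\omega,c)$'' — presumably the content of the phrase ``by the same argument.''
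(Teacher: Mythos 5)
Your proposal is correct and coincides with the paper's treatment: the paper disposes of this lemma with the single phrase ``By the same argument,'' referring to the proof of Lemma \ref{P_{+}}, and your write-up is exactly that argument carried out on the lower branch (root below $p_{-}$) instead of the upper one. Nothing in your elaboration deviates from or adds a gap to the paper's intended proof.
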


\begin{proposition}
Fix $c\in[0,+\infty)$. The function $P_{+}(\mu,c):(\overline{H}_{*},+\infty)\longrightarrow \RR$ has the following properties:

$(1)$ $P_{+}(\mu,c)$ is strictly increasing.

$(2)$ $P_{+}(\mu,c)$ is continuous.

$(3)$ $\lim\limits_{\mu\rightarrow +\infty}P_{+}(\mu,c)=+\infty$.

\end{proposition}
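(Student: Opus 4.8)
The plan is to understand $P_{+}(\mu,c)$ via the pointwise object $u'_{\mu}(x,\omega,c)$, the unique solution of $H(q,x,\omega,c)=\mu$ lying above $p_{+}(x,\omega,c)$, since by construction $P_{+}(\mu,c)=\mathbf{E}[u'_{\mu}(x,\omega,c)]$. The three assertions will all follow from monotonicity, continuity, and blow-up of this pointwise branch together with the dominated (or monotone) convergence theorem for the expectation.

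\emph{Monotonicity.} First I would fix $(x,\omega,c)$ and show $\mu\mapsto u'_{\mu}(x,\omega,c)$ is strictly increasing on $(\overline{H}_{*},\infty)$. On the region $q>p_{+}(x,\omega,c)$ the Hamiltonian $H(\cdot,x,\omega,c)$ is strictly increasing: indeed for $q>p_{+}$ we have $H(q,x,\omega,c)>|m|+\overline{k}\ge\min_{p}H$, so $q$ is to the right of the unique minimizer of the level-set convex profile, and strict level-set convexity (stated in the paragraph after \eqref{1-d Hamiltonian}: no level set has interior points) forces strict monotonicity there. Hence for $\overline{H}_{*}<\mu_{1}<\mu_{2}$ the solutions satisfy $u'_{\mu_{1}}(x,\omega,c)<u'_{\mu_{2}}(x,\omega,c)$ pointwise in $x$ for every $\omega$; taking expectations gives $P_{+}(\mu_{1},c)<P_{+}(\mu_{2},c)$, which is (1). (Strictness of the inequality for the expectation is immediate since the pointwise strict inequality is uniform in $x$ by stationarity.)

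\emph{Continuity.} For (2) I would again argue pointwise: by the implicit function theorem applied to $H(q,x,\omega,c)=\mu$ — using $\partial_{q}H>0$ on the branch $q>p_{+}$, which is strict away from $\mu=\overline{H}_{*}$ — the map $\mu\mapsto u'_{\mu}(x,\omega,c)$ is continuous (indeed $C^{1}$) on $(\overline{H}_{*},\infty)$. To pass this to $P_{+}$ I need a dominating function: on any compact subinterval $[\mu_{1},\mu_{2}]\subset(\overline{H}_{*},\infty)$, coercivity of $H$ uniform in $(x,\omega)$ (which holds here since $k,s\in L^{\infty}$) gives a uniform bound $|u'_{\mu}(x,\omega,c)|\le C(\mu_{2},c)$ for all $\mu\le\mu_{2}$ and all $(x,\omega)$. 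So dominated convergence yields continuity of $\mu\mapsto\mathbf{E}[u'_{\mu}(x,\omega,c)]=P_{+}(\mu,c)$ on that interval, hence on all of $(\overline{H}_{*},\infty)$.

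\emph{Blow-up.} For (3), the key observation is that $H(q,x,\omega,c)\le\sqrt{m^{2}+q^{2}}+c|s(x,\omega)|+|k(x,\omega)|\le|q|+c\overline{|s|}+\overline{|k|}+|m|$, so $H(q,x,\omega,c)=\mu$ with $q>p_{+}\ge 0$ forces $q=u'_{\mu}(x,\omega,c)\ge\mu-c\overline{|s|}-\overline{|k|}-|m|$, uniformly in $(x,\omega)$; here $\overline{|s|}:=\esssup|s|$, $\overline{|k|}:=\esssup|k|$, finite by \textbf{(B1)}. Taking expectations, $P_{+}(\mu,c)\ge\mu-c\overline{|s|}-\overline{|k|}-|m|\to+\infty$ as $\mu\to+\infty$. (I would write $\sqrt{m^2+q^2}\ge|q|$ rather than bother with sharper constants.)

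The only genuinely delicate point is the strict monotonicity of $H(\cdot,x,\omega,c)$ on $\{q>p_{+}(x,\omega,c)\}$, which I would pin down carefully using the explicit sign analysis of $\partial_{p}H$ already recorded in the introduction (the numerator $p^{3}+m^{2}p+cv'm^{3}$ has a single real root, and $\partial_{p}H\to 1$ as $p\to+\infty$), so in fact $\partial_{p}H>0$ for all $p$ greater than that root, and $p_{+}$ sits at or beyond that root; everything else is an application of monotone/dominated convergence to an expectation of a pointwise-defined root.
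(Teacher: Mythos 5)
Your proposal is correct, and for parts (1) and (2) it follows essentially the paper's own route: both reduce $P_{+}(\mu,c)=\mathbf{E}[u'_{\mu}]$ to the pointwise root branch $u'_{\mu}(x,\omega,c)>p_{+}(x,\omega,c)$, use that $H(\cdot,x,\omega,c)$ is strictly increasing there (your sign analysis of $\partial_{p}H$, with $p_{+}$ lying at or beyond the unique critical point, is exactly the needed justification), and pass monotonicity and continuity through the expectation by bounded/dominated convergence with the uniform bound coming from coercivity and \textbf{(B1)}. Part (3) is where you genuinely diverge: the paper argues by contradiction, assuming $P_{+}(\mu,c)$ stays bounded, invoking the sublinear corrector $\gamma$ and the existence of an interval on which $|\gamma'|<\epsilon$ to conclude that $H(P_{+}+\gamma',x,\omega,c)$ stays bounded there, contradicting $\mu\to+\infty$; you instead note $p_{+}\geq 0$ (since $H(0,x,\omega,c)=|m|+k\leq|m|+\overline{k}$) and the uniform bound $H(q,x,\omega,c)\leq |q|+|m|+c\Vert s\Vert+\Vert k\Vert$, giving the direct pointwise estimate $u'_{\mu}\geq \mu-|m|-c\Vert s\Vert-\Vert k\Vert$ and hence $P_{+}(\mu,c)\geq\mu-C\to+\infty$. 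Your version is more elementary and quantitative (it gives a linear lower bound on $P_{+}$ in $\mu$ and avoids the paper's somewhat informal step that a sublinear smooth function must have intervals of small slope), while the paper's argument uses only sublinearity of the corrector and no structure of $H$ beyond boundedness at bounded arguments. One small remark: your parenthetical that strictness in (1) needs the pointwise inequality to be ``uniform in $x$ by stationarity'' is unnecessary — a.e. strict positivity of $u'_{\mu_{2}}-u'_{\mu_{1}}$ already yields strict inequality of the expectations — but this does not affect correctness.
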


\begin{proof}
(1) Since $H(p,x,\omega,c)$, as a function of $p$, is strictly increasing on $(p_{+}(x,\omega,c),+\infty)$, and it's uniformly continuous. So
\begin{equation*}
\overline{H}_{*}<\mu_{1}<\mu_{2}<\infty\implies P_{+}(\mu_{1},c)<P_{+}(\mu_{2},c)
\end{equation*}

(2) Suppose $\mu_{n}$, $\mu\in(\overline{H}_{*},+\infty)$ and $\mu_{n}\rightarrow \mu$ as $n\rightarrow \infty$. Accordingly, we can solve $u'_{n}$ and $u'$ by Lemma \ref{P_{+}}.
\begin{eqnarray*}
H(u_{n}'(x,\omega,c),x,\omega,c)&=&\mu_{n}\\
H(u'(x,\omega,c),x,\omega,c)&=&\mu\\
\end{eqnarray*}

For each fixed $(x,\omega,c)\in\RR\times\Omega\times \RR^{+}$, by the fact that $H(\cdot,x,\omega,c)$ is smooth and strictly increasing on $(p_{+}(x,\omega,c),+\infty)$, we have $\lim\limits_{n\rightarrow \infty}u_{n}'(x,\omega,c)=u'(x,\omega,c)$, by bounded convergence theorem, $\lim\limits_{n\rightarrow\infty}\mathbf{E}[u_{n}'(x,\omega,c)]=\mathbf{E}[u'(x,\omega,c)]$. 

Thus, $\lim\limits_{n\rightarrow\infty}P_{+}(\mu_{n},c)=P_{+}(\mu,c).$

(3) If $P_{+}(\mu,c)$ is bounded, since $k(x,\omega)$, $s(x,\omega)$ are uniformly bounded and then $H(P_{+}(\mu,c),x,\omega,c)$ is uniformly bounded.
Let $\mathbf{E}[\gamma'(x,\omega)]=0$ and $\gamma(x,\omega)$ solves
\begin{equation*}
H(P_{+}(\mu,c)+\gamma',x,\omega,c)=\mu
\end{equation*}

Since $\gamma(x,\omega)$ is sub-linear and smooth. For any $\epsilon>0$, there is some interval $(a(\mu),b(\mu))$ on which $|\gamma'|<\epsilon$(Otherwise, by continuity, $\gamma$ will be at least linear growth at infinity). So $H(P_{+}(\mu,c)+\gamma',x,\omega,c)$ is uniformly bounded on $(a(\mu),b(\mu))$, this gives a contradiction when $\mu\rightarrow +\infty.$
\end{proof}

Similarly, we can prove:
\begin{proposition}
Fix $c\in[0,+\infty)$. The function $P_{-}(\mu,c):(\overline{H}_{*},+\infty)\longrightarrow \RR$ has the following properties:

$(1)$ $P_{-}(\mu,c)$ is strictly decreasing.

$(2)$ $P_{-}(\mu,c)$ is continuous.

$(3)$ $\lim\limits_{\mu\rightarrow +\infty}P_{-}(\mu,c)=-\infty$.

\end{proposition}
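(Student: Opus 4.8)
The plan is to run the argument for $P_{+}(\mu,c)$ verbatim, simply replacing the branch $\{p>p_{+}(x,\omega,c)\}$ on which $H(\cdot,x,\omega,c)$ is strictly increasing by the branch $\{p<p_{-}(x,\omega,c)\}$ on which it is strictly decreasing (recall $\lim_{p\to-\infty}\frac{\partial H}{\partial p}=-1$). Concretely, for each $\mu\in(\overline{H}_{*},+\infty)$ I would consider $H(u'(x,\omega,c),x,\omega,c)=\mu$; since $\min_{p\in\RR}H(p,x,\omega,c)\leq\overline{H}_{*}<\mu$ and $H$ is strictly level-set convex, there is exactly one solution $u'(x,\omega,c)$ lying in $(-\infty,p_{-}(x,\omega,c))$. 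As in Lemma \ref{P_{+}}, stationarity of $H$ makes $u'$ stationary, smoothness of $H(\cdot,\cdot,\omega,c)$ together with strict monotonicity on this branch makes $u'$ smooth in $x$ (so by continuity the strict inequality $u'<p_{-}$ is preserved), and coercivity of $H$ uniformly in $x$ makes $u'$ bounded. I then set $P_{-}(\mu,c):=\mathbf{E}[u'(x,\omega,c)]$, which is independent of $x$ and well defined by stationarity, and $\gamma(x,\omega,c):=u(x,\omega,c)-P_{-}(\mu,c)\,x$ is sub-linear for a.e.\ $\omega$ by the sub-additive ergodic theorem, exactly as before.

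For (1), on $(-\infty,p_{-}(x,\omega,c))$ the map $p\mapsto H(p,x,\omega,c)$ is strictly decreasing and uniformly continuous, so $\overline{H}_{*}<\mu_{1}<\mu_{2}$ forces $u'_{1}(x,\omega,c)>u'_{2}(x,\omega,c)$ pointwise; taking expectations gives $P_{-}(\mu_{1},c)>P_{-}(\mu_{2},c)$. For (2), if $\mu_{n}\to\mu$ then for each fixed $(x,\omega,c)$ the continuity of the smooth, strictly monotone inverse of $H(\cdot,x,\omega,c)$ on the lower branch yields $u'_{n}(x,\omega,c)\to u'(x,\omega,c)$; the $u'_{n}$ are uniformly bounded by coercivity, so the bounded convergence theorem gives $P_{-}(\mu_{n},c)\to P_{-}(\mu,c)$.

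For (3), I would argue by contradiction as in the proof of the preceding Proposition. If $P_{-}(\mu,c)$ stayed bounded along some sequence $\mu\to+\infty$, then $k$ and $s$ being uniformly bounded would make $H(P_{-}(\mu,c),x,\omega,c)$ uniformly bounded; since $\gamma(\cdot,\omega,c)$ is sub-linear and smooth with $\mathbf{E}[\gamma'(x,\omega,c)]=0$, on some interval $(a(\mu),b(\mu))$ we would have $|\gamma'|<1$ (otherwise $\gamma$ would grow at least linearly), so $u'=P_{-}(\mu,c)+\gamma'$ would be bounded there, hence $H(u'(x,\omega,c),x,\omega,c)$ bounded on that interval, contradicting its equality to $\mu\to+\infty$. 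Thus $\lim_{\mu\to+\infty}P_{-}(\mu,c)=-\infty$.

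I do not expect any genuine obstacle here: the only point deserving care is that part (3) now relies on the coercivity of $H$ along the negative half-line (equivalently $\lim_{p\to-\infty}\frac{\partial H}{\partial p}=-1$) rather than the positive one, and that the lower branch $u'<p_{-}(x,\omega,c)$ is selected consistently throughout — both already furnished by the strict level-set convexity recorded in Remark \ref{the remark with 7 components}.
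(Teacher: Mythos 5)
Your proposal is correct and coincides with the paper's intent: the paper proves the $P_{+}(\mu,c)$ proposition in detail and then dismisses the $P_{-}(\mu,c)$ case with ``Similarly, we can prove,'' which is precisely the branch-switching adaptation you carried out (working on $\{p<p_{-}(x,\omega,c)\}$ where $H$ is strictly decreasing, so monotonicity reverses, with continuity via bounded convergence and the limit $-\infty$ by the same sublinearity contradiction). No gaps beyond those already present in the paper's own $P_{+}$ argument, which you reproduce faithfully.
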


\begin{definition}
By the above propositions of $P_{+}(\mu,c)$ and $P_{-}(\mu,c)$, we denote their inverse functions by $\mu_{+}(p,c)$ and $\mu_{-}(p,c)$.
\begin{equation*}
\mu_{+}(p,c):(\inf\limits_{\mu} P_{+}(\mu,c),+\infty)\longrightarrow (\overline{H}_{*},+\infty)
\end{equation*}

\begin{equation*}
\mu_{-}(p,c):(-\infty, \sup\limits_{\mu} P_{-}(\mu,c))\longrightarrow (\overline{H}_{*},+\infty)
\end{equation*}

And then we can define the continuous level-set convex function $\mu(p,c)$.
\begin{equation*}
\mu(p,c):=
\begin{cases}
\mu_{-}(p,c)  &  \text{ if } p\in (-\infty, \sup\limits_{\mu} P_{-}(\mu,c))\\
\overline{H}_{*} &  \text{ if } p\in [\sup\limits_{\mu} P_{-}(\mu,c),\inf\limits_{\mu} P_{+}(\mu,c)]\\
\mu_{+}(p,c)  &  \text{ if } p\in (\inf\limits_{\mu} P_{+}(\mu,c),+\infty)
\end{cases}
\end{equation*}
\end{definition}

\begin{lemma}\label{key}
$\mu(p,c)=\overline{H}(p,c)$
\end{lemma}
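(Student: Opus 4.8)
The plan is to show the two functions agree on each of the three pieces in the definition of $\mu(p,c)$, using the homogenization identity (\ref{convergence in the minimum piece}) together with the sub-linear correctors constructed in Lemma \ref{P_{+}} and its mirror. First I would dispose of the flat region: for $p\in[\sup_\mu P_-(\mu,c),\inf_\mu P_+(\mu,c)]$ we have $\mu(p,c)=\overline{H}_*$ by definition, and remark (5) already records $\overline{H}_*=\min_p\overline{H}(p,c)=|m|+\overline{k}$, so it suffices to know $\overline{H}(p,c)$ actually attains this minimum value on the stated interval; this follows because $\overline{H}(\cdot,c)$ is level-set convex with minimum $\overline{H}_*$ and, by the outer pieces handled below, $\overline{H}=\mu_\pm$ strictly exceeds $\overline{H}_*$ outside $[\sup_\mu P_-,\inf_\mu P_+]$, forcing equality in between.

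The heart of the argument is the right branch $p>\inf_\mu P_+(\mu,c)$ (the left branch $p<\sup_\mu P_-(\mu,c)$ is symmetric). Fix such a $p$ and set $\mu:=\mu_+(p,c)>\overline{H}_*$, so $P_+(\mu,c)=p$. By Lemma \ref{P_{+}} there is a stationary gradient field $u'(x,\omega,c)$ with $H(u',x,\omega,c)=\mu$, $u'>p_+$, $\mathbf{E}[u']=p$, and the corresponding $\gamma(x,\omega,c)=u-px$ sub-linear for a.e.\ $\omega$. Thus $\gamma$ is an exact sub-linear corrector at level $\mu$: it solves $H(p+\gamma',x,\omega,c)=\mu$. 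I would then run the standard comparison between $u^\delta$ and $p\cdot x+\gamma(x,\omega,c)\mp C_\delta$ with $C_\delta\to 0$: since $\delta(p\cdot x+\gamma)+H(p+\gamma',x,\omega,c)-\mu=\delta(p\cdot x+\gamma)$, sub-linearity of $\gamma$ and boundedness of $u^\delta$ let one sandwich $-\delta u^\delta(0,\omega)$ between $\mu-o(1)$ and $\mu+o(1)$; letting $\delta\to0$ and invoking (\ref{convergence in the minimum piece}) gives $\overline{H}(p,c)=\mu=\mu_+(p,c)=\mu(p,c)$. This is exactly the place where remark (7) flags that the cell problem need not have solutions in general, so the content is that Lemma \ref{P_{+}} has supplied one here.

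The main obstacle is making the comparison step rigorous on all of $\RR$ with only a sub-linear (not Lipschitz-bounded) corrector and with the Hamiltonian merely level-set convex rather than convex: the usual perturbed-test-function argument must be replaced by the metric-problem machinery of \cite{ref Armstrong and Souganidis} as weakened in Section \ref{section 2}, i.e.\ one uses the Weak Comparison Principle (Lemma 2.3) in place of the full comparison principle to control the approximate cell problem. A secondary technical point is continuity/monotonicity bookkeeping: one must check that $\inf_\mu P_+(\mu,c)=\lim_{\mu\downarrow\overline{H}_*}P_+(\mu,c)$ and $\sup_\mu P_-(\mu,c)=\lim_{\mu\downarrow\overline{H}_*}P_-(\mu,c)$ so that the three pieces of $\mu(p,c)$ glue into a continuous level-set convex function matching $\overline{H}(\cdot,c)$ everywhere, including at the two junction points where a limiting argument (letting $\mu\downarrow\overline{H}_*$ in the branch identity just proved) yields $\overline{H}=\overline{H}_*$ by continuity of $\overline{H}(\cdot,c)$.
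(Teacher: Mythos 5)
Your proposal takes essentially the same route as the paper: on the two outer branches the sub-linear correctors from Lemma \ref{P_{+}} and its mirror identify $\overline{H}(p,c)$ with $\mu_{\pm}(p,c)$ through the $\delta$-problem and (\ref{convergence in the minimum piece}), and on the middle interval level-set convexity of $\overline{H}(\cdot,c)$ together with $\overline{H}\geq\overline{H}_{*}$ forces $\overline{H}=\overline{H}_{*}$, which is exactly the paper's two-step argument. One cosmetic bookkeeping remark: since the approximate problem is $\delta u^{\delta}+H(p+(u^{\delta})',x,\omega,c)=0$, the natural comparison function is $\gamma(x,\omega,c)-\mu/\delta$ rather than $p\cdot x+\gamma$, but this does not affect your argument.
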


\begin{proof}

By the existence of cell problem, $\mu(p,c)=\overline{H}(p,c)$, $\forall p\in(-\infty,\sup\limits_{\mu} P_{-}(\mu,c))\cup(\inf\limits_{\mu} P_{+}(\mu,c),+\infty)$. By level-set convexity of $\overline{H}(p,c)$ and $\overline{H}(p,c)\geq \overline{H}_{*}$, we have $\overline{H}(p,c)=\overline{H}_{*}$, $\forall p\in[\sup\limits_{\mu} P_{-}(\mu,c),\inf\limits_{\mu} P_{+}(\mu,c)]$. So $\mu(p,c)=\overline{H}(p,c)$.
\end{proof}

The next theorem is aimed to study the dependence of $\overline{H}(n,c)$ on $c$. As mentioned under (\ref{1-d problem}), $\overline{H}(n,c)$ is equal to $\overline{H}(m,n,c)$ in the original 2-d problem. We will fix a unit vector $(m,n)\in\RR^{2}$ and denote $h(c):=\overline{H}(n,c)=\overline{H}(m,n,c)$ in the following theorem.

\begin{theorem}\label{main theorem}Under \textbf{(B1)-(B4)}, fix a unit vector $(m,n)\in\RR^{2}$ with $mn\ne 0$.

$(1)$ $h(c)\in C^{0,1}(\RR^{+})$ and $\Vert s\Vert:=\Vert s(x,\omega)\Vert_{L^{\infty}(\RR \times \Omega)}$ is the Lipschitz constant. 

$(2)$ $h'(c)\leq 0$ for a.e. $c\in(0,\infty)$. If $h(c)>\overline{H}_{*}$, $h'(c)<0$.

$(3)$ There exists $\overline{c}>0$, when $c>\overline{c}$, $h(c)=\overline{H}_{*}$.
\end{theorem}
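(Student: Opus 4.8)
\medskip
\noindent\textbf{Strategy of proof.}

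\emph{Part (1).} Since $H$ depends on $c$ only through the term $c\,s(x,\omega)\,p/\sqrt{m^{2}+p^{2}}$ and $|p/\sqrt{m^{2}+p^{2}}|<1$, one has the pointwise bound $|H(p,x,\omega,c_{1})-H(p,x,\omega,c_{2})|\le\|s\|\,|c_{1}-c_{2}|$ for all $p,x,\omega$. The map $H\mapsto\overline H$ commutes with the addition of constants and is order preserving: if $H_{1}\le H_{2}$ pointwise, a solution of the approximate cell problem $\delta u^{\delta}+H_{2}(p+(u^{\delta})',x,\omega)=0$ is a subsolution of the one for $H_{1}$, so the comparison principle and $-\delta u^{\delta}(0)\to\overline H$ give $\overline H_{1}\le\overline H_{2}$. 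Sandwiching $H(\cdot,\cdot,\cdot,c_{2})$ between $H(\cdot,\cdot,\cdot,c_{1})\pm\|s\|\,|c_{1}-c_{2}|$ then yields $|h(c_{1})-h(c_{2})|\le\|s\|\,|c_{1}-c_{2}|$, so $h\in C^{0,1}(\RR^{+})$ with Lipschitz constant $\|s\|$, hence also $|h'|\le\|s\|$ a.e.

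\emph{Part (2).} Fix $p=(m,n)$ with $n\ne0$; I treat $n>0$, the case $n<0$ being identical with $P_{-}$ in place of $P_{+}$. Since every value of $u'$ on the ``$P_{-}$'' branch lies below $p_{-}(x,\omega,c)\le0$, we have $\sup_{\mu}P_{-}(\mu,c)\le0<n$, so whenever $h(c)>\overline H_{*}$ the value $n$ is forced onto the $P_{+}$ branch, i.e.\ $P_{+}(h(c),c)=n$. Recall $P_{+}(\mu,c)=\mathbf E[u'_{\mu,c}]$, where $u'=u'_{\mu,c}(x,\omega)>p_{+}(x,\omega,c)\ (\ge0)$ solves $H(u',x,\omega,c)=\mu$. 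Using \textbf{(B1)}, the implicit function theorem, and a compactness argument showing $H_{p}(u')$ is bounded below by a positive constant on compact $(\mu,c)$-sets (because $H(u')=\mu>\overline H_{*}\ge\min_{p}H$ keeps $u'$ off the unique critical point of $H(\cdot,x,\omega,c)$), one gets $P_{+}\in C^{1}$, $\partial_{\mu}P_{+}=\mathbf E[1/H_{p}(u')]>0$, hence $h$ is $C^{1}$ wherever $h>\overline H_{*}$ with $h'(c)=-\partial_{c}P_{+}(h(c),c)/\partial_{\mu}P_{+}(h(c),c)$. So everything reduces to the sign of $\partial_{c}P_{+}$. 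Differentiating $H(u',x,\omega,c)=\mu$ in $c$:
\[
\partial_{c}u'=-\frac{s\,u'/\sqrt{m^{2}+(u')^{2}}}{H_{p}(u')}=-\frac{s}{1+g},\qquad g:=\frac{c\,s\,m^{2}}{u'\,(m^{2}+(u')^{2})}.
\]
The key observation: on this branch $u'>0$, so (using $c\ge0$, $m\ne0$) $g$ has the same sign as $s$, while $1+g=\sqrt{m^{2}+(u')^{2}}\,H_{p}(u')/u'>0$; hence $sg/(1+g)\ge0$ pointwise, i.e.\ $-s/(1+g)\ge -s$ pointwise. Integrating and using $\mathbf E[s]=0$ from \textbf{(B4)} gives $\partial_{c}P_{+}(\mu,c)=\mathbf E[-s/(1+g)]\ge\mathbf E[-s]=0$; for $c>0$ the inequality is strict on $\{s\ne0\}$, a set of positive measure by \textbf{(B3)}--\textbf{(B4)}, so $\partial_{c}P_{+}(\mu,c)>0$ and thus $h'(c)<0$ whenever $c>0$ and $h(c)>\overline H_{*}$. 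The full a.e.\ statement then follows from Part (3), which shows $\{c:h(c)=\overline H_{*}\}=[\overline c,\infty)$, an interval on which $h$ is constant and so $h'=0$.

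\emph{Part (3).} I will show the flat middle region of $\overline H(\cdot,c)$ eventually absorbs $n$. By monotone convergence $\inf_{\mu}P_{+}(\mu,c)=\mathbf E[p_{+}(x,\omega,c)]$. This is nondecreasing in $c$: differentiating $H(p_{+},x,\omega,c)=|m|+\overline k$ gives $\partial_{c}p_{+}=-s/(1+g_{+})$, with $g_{+}=c\,s\,m^{2}/(p_{+}(m^{2}+p_{+}^{2}))$, on $\{k<\overline k\}$ (where $p_{+}>0$ and $H_{p}(p_{+})>0$), while $\partial_{c}p_{+}=0$ on $\{k=\overline k\}$ since there $s=0$ by \textbf{(B2)}; since also $\mathbf E[s\,\mathbf 1_{\{k<\overline k\}}]=\mathbf E[s]-\mathbf E[s\,\mathbf 1_{\{k=\overline k\}}]=0$ by \textbf{(B2)}, the sign argument from Part (2) gives $\partial_{c}\mathbf E[p_{+}]\ge0$. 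It also tends to $+\infty$: on $\{s<0\}$ one has $k<\overline k$ by \textbf{(B2)}, and $\sqrt{m^{2}+p_{+}^{2}}+c\,s\,p_{+}/\sqrt{m^{2}+p_{+}^{2}}=|m|+(\overline k-k)$ with $\overline k-k>0$ forces $p_{+}(x,\omega,c)\to\infty$; since $\mathbf P(s(0,\omega)<0)>0$ (from $\underline s<0$, stationarity, continuity of $s(\cdot,\omega)$, and Fubini), Fatou gives $\mathbf E[p_{+}]\to\infty$. Symmetrically, $\sup_{\mu}P_{-}(\mu,c)=\mathbf E[p_{-}(x,\omega,c)]$ is nonincreasing and tends to $-\infty$. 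Hence $\{c\ge0:\sup_{\mu}P_{-}(\mu,c)\le n\le\inf_{\mu}P_{+}(\mu,c)\}=[\overline c,\infty)$ for some finite $\overline c\ge0$, and on it $\overline H(n,c)=\mu(n,c)=\overline H_{*}$ by the definition of $\mu(\cdot,c)$ together with Lemma~\ref{key}. Enlarging $\overline c$ to a positive number if necessary gives the assertion, and it also completes Part (2).

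\emph{Where the difficulty lies.} The crux is precisely the sign phenomenon noted after Theorem~\ref{main thm}: $\partial_{c}u'$ changes sign with $s$, so no crude estimate suffices; the whole argument rests on the pointwise inequality $sg/(1+g)\ge0$, whose proof uses $c\ge0$, $m\ne0$, and the branch structure ($u'>0$, $1+g>0$) to pin down the sign of $g$. The analogous, slightly more delicate, step in Part (3) is to combine the blow-up of $p_{+}$ on $\{s<0\}$ — where assumption \textbf{(B2)} is exactly what keeps the right-hand side $|m|+(\overline k-k)$ away from $|m|$ — with the identity $\mathbf E[s\,\mathbf 1_{\{k<\overline k\}}]=0$, again a consequence of \textbf{(B2)}.
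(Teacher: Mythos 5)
Your proposal is correct, but only part (3) takes a genuinely different route; parts (1)--(2) are essentially the paper's argument in different clothing. For (1) you use the same comparison argument on the $\delta$-approximate cell problems. For (2), the paper differentiates the cell problem $H(n+u',x,\omega,c)=h(c)$ directly in $c$, uses $\mathbf{E}[\partial_c u']=0$ (bounded convergence) together with the strict pointwise inequality $s/(1+as)\le s$ and $\mathbf{E}[s]=0$; you differentiate at fixed level $\mu$, get $\partial_c P_+=\mathbf{E}[-s/(1+g)]>0$ and $\partial_\mu P_+=\mathbf{E}[1/(\partial H/\partial p)]>0$, and invert through $P_+(h(c),c)=n$. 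Since your $g$ is exactly the paper's $a\,s$, the key sign fact $sg/(1+g)\ge0$ is the same; your packaging even yields the slightly stronger conclusion that $h\in C^{1}$ on $\{h>\overline{H}_*\}$. In part (3) you diverge: the paper builds an explicit stationary sublinear test function supported on $\{s<-\tau/2\}$, checks $H(n+\phi',x,\omega,c)\le\overline{H}_*$ for $c>\overline c$ with an explicit $\overline c$, and contradicts the sublinear corrector; you instead show the flat segment $[\mathbf{E}[p_-],\mathbf{E}[p_+]]$ of $\overline{H}(\cdot,c)$ absorbs $n$, via $\inf_\mu P_+=\mathbf{E}[p_+]$, the pointwise blow-up of $p_+$ on $\{s<0\}$ (where \textbf{(B2)} keeps $\overline k-k>0$) plus Fatou, and the same sign argument for monotonicity of $\mathbf{E}[p_+]$ in $c$. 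Your route buys the structural statement that $\{c:h(c)=\overline{H}_*\}$ is exactly a half-line (a monotone quenching threshold, and hence the a.e.\ statement in (2)); the paper's construction buys an explicit quenching constant $\overline c$. Two points you leave implicit should be recorded: differentiating under the expectation needs a uniform bound on $\partial_c u'$ and $\partial_c p_+$, i.e.\ a positive lower bound on $1+g$ (for $u'$ this follows, as in the paper's Claim 1, from $u'+cs>0$ together with positive lower and upper bounds on $u'$; for $p_+$ one checks from $H(p_+)=\overline{H}_*$ that $1+g_+\ge\tfrac12$ on $\{k<\overline k\}$), and for the mere existence of $\overline c$ the monotonicity of $\mathbf{E}[p_+]$ is not needed, since Fatou alone gives $\mathbf{E}[p_+(\cdot,c)]\rightarrow\infty$ as $c\rightarrow\infty$.
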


\begin{proof}
(1) Fix $c_{1}$, $c_{2}\in(0,\infty)$, then $\overline{H}_{*}\leq h(c_{1})$, $h(c_{2})<\infty$. For each $0<\delta\ll 1$, let $u^{\delta}$, $v^{\delta}$ be the unique solutions of the following two equations.
\begin{equation}\label{app c1}
\delta u^{\delta}+\sqrt{m^{2}+(n+(u^{\delta})')^{2}}+\frac{c_{1}(n+(u^{\delta})')s(x,\omega)}{\sqrt{m^{2}+(n+(u^{\delta})')^{2}}}+k(x,\omega)= 0
\end{equation}

\begin{equation}\label{app c2}
\delta v^{\delta}+\sqrt{m^{2}+(n+(v^{\delta})')^{2}}+\frac{c_{2}(n+(v^{\delta})')s(x,\omega)}{\sqrt{m^{2}+(n+(v^{\delta})')^{2}}}+k(x,\omega)= 0
\end{equation}

By Remark \ref{the remark with 7 components}.

\begin{equation*}
\lim\limits_{\delta\rightarrow 0}|\delta u^{\delta}(0,\omega)+h(c_{1})|=\lim\limits_{\delta\rightarrow 0}|\delta v^{\delta}(0,\omega)+h(c_{2})|=0
\end{equation*}

Since $w(x,\omega):=v+\frac{1}{\delta}\Vert s \Vert |c_{2}-c_{1}|$ is a super solution of (\ref{app c1}). By comparison principle, $\delta u^{\delta}\leq \delta v^{\delta}+\Vert s \Vert |c_{2}-c_{1}|$, similarly, $\delta v^{\delta}\leq \delta u^{\delta}+\Vert s \Vert |c_{2}-c_{1}|$, thus
\begin{equation*}
|\delta u^{\delta}(0,\omega)-\delta v^{\delta}(0,\omega)|\leq \Vert s \Vert |c_{2}-c_{1}|
\end{equation*}

Let $\delta \rightarrow 0$, we get
\begin{equation*}
|h(c_{2})-h(c_{1})|\leq \Vert s \Vert|c_{2}-c_{1}| 
\end{equation*}

(2) Fix $c_{0}>0$. Without of loss of generality, we assume $n>0$. Lipschitz function is differentiable a.e., so if $h(c_{0})=\overline{H}_{*}$ and $h(c_{0})$ is differentiable at $c_{0}$, then $h'(c_{0})=0$. 

Now assume $h(c_{0})> \overline{H}_{*}$ and denote $f(t):=\sqrt{m^{2}+t^{2}}.$ 

We will focus on the cell problem $H(n+u',x,\omega,c)=h(c).$

By continuity, there is some $\epsilon>0$, such that for $c\in I_{\epsilon}=(c_{0}-\epsilon,c_{0}+\epsilon)\cap\RR^{+}$, $h(c)-\overline{H}_{*}$ has a positive lower bound. Since  $u(x,\omega,c)$ is smooth, $n+u'(x,\omega,c)>0$ has a positive lower bound.

To show $h'(c_{0})<0$, we first show:

\textbf{Claim 1:}
For $c\in I_{\epsilon}$ (here $t=n+u'$ in $f(t)$).
\begin{equation}\label{claim 1 in the proof of main theorem}
f'+cs(x,\omega)f''=\frac{(n+u')^{3}+m^{2}(n+u')+cs(x,\omega)m^{2}}{(m^{2}+(n+u')^{2})^{\frac{3}{2}}}>0
\end{equation}

To prove \textbf{Claim 1}, it suffices to show $(n+u')+cs(x,\omega)>0$.

By the fact that $\overline{H}_{*}=|m|+\overline{k}$ and
\begin{equation*}
\sqrt{m^{2}+(n+u')^{2}}+\frac{c(n+u')s(x,\omega)}{\sqrt{m^{2}+(n+u')^{2}}}+k(x,\omega)=h(c)>\overline{H}_{*}
\end{equation*}

We have
\begin{equation*}
\sqrt{m^{2}+(n+u')^{2}}-|m|+\frac{c(n+u')s(x,\omega)}{\sqrt{m^{2}+(n+u')^{2}}}>0
\end{equation*}

This is equivalent to
\begin{equation*}
\frac{(n+u')^{2}}{\sqrt{m^{2}+(n+u')^{2}}+|m|}+\frac{c(n+u')s(x,\omega)}{\sqrt{m^{2}+(n+u')^{2}}}>0
\end{equation*}

So
\begin{equation*}
\frac{(n+u')^{2}+c(n+u')s(x)}{\sqrt{m^{2}+(n+u')^{2}}}=\frac{(n+u')^{2}}{\sqrt{m^{2}+(n+u')^{2}}}+\frac{c(n+u')s(x,\omega)}{\sqrt{m^{2}+(n+u')^{2}}}>0
\end{equation*}

Which means
\begin{equation*}
(n+u')(n+u'+cs(x,\omega))>0
\end{equation*}

The fact that $n+u'>0$ implies $n+u'+cs(x,\omega)>0$. Thus \textbf{Claim 1} is proved.\\

Immediately, we have:
\begin{equation}\label{positive expectation}
\mathbf{E}\Big[\frac{1}{f'+cs(x,\omega)f''}\Big]>0
\end{equation}

The fact that 
\begin{equation*}
f'+cs(x,\omega)f''=\frac{(n+u')^{3}+m^{2}(n+u')+cs(x,\omega)m^{2}}{(m^{2}+(n+u')^{2})^{\frac{3}{2}}}>\frac{(n+u')^{3}}{(m^{2}+(n+u')^{2})^{\frac{3}{2}}}
\end{equation*}

implies $f'+cs(x,\omega)f''$ has a positive lower bound. And the fact that
\begin{equation*}
f'(n+u')=\frac{n+u'}{\sqrt{m^{2}+\big(n+u'\big)^{2}}}
\end{equation*}

implies $f'(n+u')$ has a positive lower bound. 

If we denote
\begin{equation*} 
a(x,\omega,c):=\frac{cf''}{f'}=\frac{m^{2}c}{(n+u')(m^{2}+(n+u')^{2})}> 0
\end{equation*}

Then by dividing $f'$ in (\ref{claim 1 in the proof of main theorem}).
\begin{equation*}
1+a(x,\omega,c)s(x,\omega)>0 \text{ has a positive lower bound}
\end{equation*}

Now, the cell problem can be rewritten as (\ref{rewritten eqns}). Since $F(t):=f(t)+csf'(t)+k$ is smooth and increasing with respect to $t=n+u'(x,\omega,c)$ and $h(c)\in C^{0,1}(\RR^{+})$, $u'=F^{-1}(h(c))-n$ is differentiable a.e. with respect to $c$.
\begin{equation}\label{rewritten eqns}
f(n+u'(x,\omega,c))+cs(x,\omega)f'(n+u'(x,\omega,c))+k(x,\omega)=h(c)
\end{equation}

Differentiate it w.r.t. $c$ gives: \Big(here $\frac{\partial}{\partial c}(u')=\frac{\partial}{\partial c}(\frac{\partial}{\partial x}u(x,\omega,c))$ \Big) 
\begin{equation}\label{diff cell rewrite}
h'(c)\cdot\frac{1}{f'+cs(x,\omega)f''}=\frac{s(x,\omega)}{1+a(x,\omega,c)s(x,\omega)}+\frac{\partial}{\partial c}(u') 
\end{equation}

The above positive lower bounds as well as the boundedness of $h'$ and $s(x,\omega)$ implies that $\frac{\partial}{\partial c}(u')$ is bounded uniformly for $(c,x,\omega)\in I_{\epsilon}\times\RR\times\Omega$. This will allow us to apply bounded convergence theorem in (\ref{BDD CNV THM}).

Taking expectation in (\ref{diff cell rewrite}) gives:
\begin{equation*}
h'(c_{0})\cdot\mathbf{E}\Big[\frac{1}{f'+c_{0}s(x,\omega)f''}\Big]=\mathbf{E}\Big[\frac{s(x,\omega)}{1+a(x,\omega,c_{0})s(x,\omega)}\Big]+\mathbf{E}\Big[\frac{\partial}{\partial c}(u')(x,\omega,c_{0})\Big] 
\end{equation*}

Choose $I_{\epsilon}\ni c_{k}\rightarrow c_{0}$, by bounded convergence theorem and the fact $\mathbf{E}[u']=0$.
\begin{eqnarray}\label{bdd conv thm}
\mathbf{E}\Big[\frac{\partial}{\partial c}(u')(x,\omega,c_{0})\Big]&=&\mathbf{E}\Big[\lim\limits_{I_{\epsilon}\ni c_{k}\rightarrow c_{0}}\frac{u'(x,\omega,c_{k})-u'(x,\omega,c_{0})}{c_{k}-c_{0}}\Big]\\
&=& \lim\limits_{I_{\epsilon}\ni c_{k}\rightarrow c_{0}} \mathbf{E}\Big[\frac{u'(x,\omega,c_{k})-u'(x,\omega,c_{0})}{c_{k}-c_{0}}\Big]\label{BDD CNV THM}\\
&=& 0
\end{eqnarray}

Recall that $a(x,\omega,c_{0})>0$, $1+a(x,\omega,c_{0})s(x,\omega)>0$ and $s(x,\omega)$ is not a constant function, we have
\begin{eqnarray*}
\mathbf{E}\Big[\frac{s(x,\omega)}{1+a(x,\omega,c_{0})s(x,\omega)}\Big]&=& \mathbf{E}\Big[\frac{s(x,\omega)}{1+a(x,\omega,c_{0})s(x,\omega)}\chi_{\lbrace\omega:s(x,\omega)>0\rbrace}\Big]\\
&+&\mathbf{E}\Big[\frac{s(x,\omega)}{1+a(x,\omega,c_{0})s(x,\omega)}\chi_{\lbrace\omega:s(x,\omega)\leq 0\rbrace}\Big]\\
&<& \mathbf{E}\Big[s(x,\omega)\chi_{\lbrace\omega:s(x,\omega)>0\rbrace}\Big]+\mathbf{E}\Big[s(x,\omega)\chi_{\lbrace\omega:s(x,\omega)\leq 0\rbrace}\Big]\\
&=& \mathbf{E}[s(x,\omega)]\\
&=& 0
\end{eqnarray*}

Combine these with (\ref{positive expectation}), we can conclude:
\begin{equation*}
h'(c_{0})<0
\end{equation*}

(3) Without loss of generality, let $n>0$ and $\tau:=\vert\underline{s}\vert=-\underline{s}>0$.

For each $\omega\in\Omega$, there are countable disjoint intervals $\lbrace (l_{i}(\omega),r_{i}(\omega)): r_{i-1}\leq l_{i}, i\in\ZZ\rbrace$ such that
\begin{equation*}
A(\omega):=\lbrace x:s(x,\omega)< -\frac{\tau}{2}\rbrace=\bigcup\limits_{i\in\ZZ}\big(l_{i}(\omega),r_{i}(\omega)\big)
\end{equation*}

Denote
\begin{equation*}
B(\omega):=\RR\diagdown A(\omega)=\bigcup\limits_{i\in\ZZ}\big[r_{i}(\omega),l_{i+1}(\omega)\big]
\end{equation*}
Since $s(x,\omega)<-\frac{\tau}{2}$ on $A(\omega)$, by the stationary of $\chi_{A(\omega)}(x)$ and $\chi_{B(\omega)}(x)$ we have for a.e. $\omega\in\Omega$:
\begin{equation*}
\alpha:=\lim\limits_{L\rightarrow+\infty}\frac{1}{2L}\int_{-L}^{L}\chi_{A(\omega)}(x)dx=\mathbf{P}\Big[\omega\in\Omega: s(0,\omega)<-\frac{\tau}{2}\Big] 
\end{equation*}

By \textbf{(B4)} and Remark \ref{the remark with 7 components}, $\alpha\in(0,1)$. Now we can construct a smooth stationary function $\psi(x,\omega)$ with $\psi(x,\omega)=0$ on $B(\omega)$ and 

\begin{equation*}
\frac{1}{r_{i}-l_{i}}\int_{l_{i}}^{r_{i}}\psi(x,\omega)dx=\frac{n}{\alpha} \text{ and } 0\leq\psi(x,\omega)\leq \frac{2n}{\alpha}
\end{equation*}

Then we will have
\begin{equation*}
\lim\limits_{L\rightarrow+\infty}\frac{1}{2L}\int_{-L}^{L}\psi(x,\omega)dx=\lim\limits_{L\rightarrow+\infty}\frac{1}{L}\int_{0}^{L}\psi(x,\omega)dx=\lim\limits_{L\rightarrow+\infty}\frac{1}{L}\int_{-L}^{0}\psi(x,\omega)dx=n
\end{equation*}

Let $\phi'(x,\omega):=\psi(x,\omega)-n$, then

\begin{equation*}
\lim\limits_{|x|\rightarrow \infty}\frac{\phi(x,\omega)-\phi(0,\omega)}{|x|}=\lim\limits_{|x|\rightarrow \infty}\frac{1}{|x|}\int_{0}^{x}\phi'(s,\omega)ds=0
\end{equation*}

Which means that for a.e. $\omega\in\Omega$, $\phi(x,\omega)$ is sub-linear.\\

The derivative of $g(t):=\sqrt{m^{2}+t^{2}}+\frac{cts(x,\omega)}{\sqrt{m^{2}+t^{2}}}$ with respect to $t$ is $\frac{t^{3}+m^{2}t+m^{2}cs(x,\omega)}{(m^{2}+t^{2})^{\frac{3}{2}}}$,
Let $\overline{c}:=\frac{2}{\tau m^{2}}\big [(\frac{2n}{\alpha})^{3}+m^{2}(\frac{2n}{\alpha})\big ]>0$. For all $x\in A(\omega)$, if $c>\overline{c}$, then $g'(t)<0$ for $t\in[0,\frac{2n}{\alpha}]$. By the construction of $\phi$,
\begin{equation*}
0\leq n+\phi'(x,\omega)=\psi(x,\omega)\leq \frac{2n}{\alpha}
\end{equation*}

And recall that $\text{supp}(n+\phi'(x,\omega))\subset A(\omega)$, then

\begin{eqnarray*}
\max\limits_{x\in\RR}\Big\lbrace \sqrt{m^{2}+(n+\phi')^{2}}+\frac{c(n+\phi')s}{\sqrt{m^{2}+(n+\phi')^{2}}}+k \Big\rbrace 
&\leq&\max\limits_{x\in\RR} \lbrace |m|+k(x,\omega) \rbrace\\
&=& |m|+\max_{x\in\RR}k(x,\omega)\\
&=& \overline{H}_{*}
\end{eqnarray*}

If $h(c)>\overline{H}_{*}$, by Lemma \ref{key}, the cell problem has solution $u(x,\omega)$ which is sub-linear for a.e. $\omega\in\Omega$. By above construction, $\phi$ is also sub-linear for a.e. $\omega\in\Omega$. Fix such $\omega$ that both of $\phi(x,\omega)$ and $u(x,\omega)$ are sub-linear. So for any $\delta>0$, $u(x,\omega)-\phi(x,\omega)+\delta\sqrt{x^{2}+1}$ can achieve minimum at some point $x_{\delta}$, so
\begin{equation*}
h(c)\leq H(n+\phi'(x_{\delta},\omega)-\delta \frac{x_{\delta}}{\sqrt{x_{\delta}^{2}+1}},x_{\delta},\omega)
\end{equation*}

$\delta\rightarrow 0 \implies h(c)\leq \max\limits_{x\in\RR} H(n+\phi'(x,\omega),x,\omega)=\overline{H}_{*}$, this is a contradiction.

Thus $h(c)=\overline{H}_{*}$ when $c>\overline{c}.$
\end{proof}

\begin{proof}[Proof of theorem \ref{main thm}]
(1) comes from section 2.

(2) If $mn\ne 0$, by Theorem \ref{main theorem} with $k(x,\omega)=mv(x,\omega)$, $s(x,\omega)=mv'(x,\omega)$.

If $m=0$, $\overline{H}(p)=\overline{H}(p,c)=|n|=1>0=|m|+\sup\limits_{(x,\omega)\in\RR\times\Omega}mv(x,\omega)$.

If $n=0$, $\overline{H}(p)=\overline{H}(p,c)=|m|+\sup\limits_{(x,\omega)\in\RR\times\Omega}mv(x,\omega)$.

(3) If $mv\equiv 0$, then $\overline{H}(p)\equiv \overline{H}(p,c)$.\\

Suppose $\overline{H}(m,n)=\overline{H}(m,n,c)>\overline{H}_{*}.$

If $mn\ne 0$ we must have $v$ is constant, otherwise by Theorem \ref{main theorem}, $\overline{H}(m,n,c)>\overline{H}(m,n)$ which gives a contradiction. By $\mathbf{E}[v]=0$, we must have $mv=0.$

If $m=0$ then $mv\equiv 0.$

If $n=0$, this is impossible since $\overline{H}(m,n)=\overline{H}(m,n,c)\equiv\overline{H}_{*}.$

Thus $mv\equiv 0.$
\end{proof}

\section{Acknowledgements}
The author would like to thank his advisor Yifeng Yu for his generous support and providing the topic. The author is also grateful to Andrew J. Thomas and Christopher Lopez for their helpful suggestions in writing the paper. Particularly, the author want to thank the referee in providing many constructive revision suggestions to improve the presentation of this article.

\bibliographystyle{amsplain}

\begin{thebibliography}{n} %% n is number of items, or the largest label

\bibitem{1}\label{ref Armstrong and Souganidis} S. N. Armstrong and P. E. Souganidis,
\emph{Stochastic homogenization of level-set convex Hamilton-Jacobi equations},
Int. Math. Res. Not., \textbf{15}(2013), 3420-3449. \href{http://www.ams.org/mathscinet/search/publdoc.html?arg3=&co4=AND&co5=AND&co6=AND&co7=AND&dr=all&pg4=AUCN&pg5=TI&pg6=PC&pg7=ALLF&pg8=ET&r=1&review_format=html&s4=&s5=Stochastic%20homogenization%20of%20level-set%20convex%20Hamilton-Jacobi%20equations&s6=&s7=&s8=All&vfpref=html&yearRangeFirst=&yearRangeSecond=&yrop=eq}{MR3089731}

\bibitem{2}\label{1-d seperable noncovex by ATY}S. N. Armstrong, H. V. Tran and Y. Yu. Stochastic homogenization of nonconvex Hamilton-Jacobi equations in one space dimension, preprint, arXiv:1410.7053 [math.AP].
	
	
\bibitem{3}\label{ref CNS} P. Cardaliaguet, J. Nolen, P. E. Souganidis,
\emph{Homogenization and enhancement for the G-equation}, Arch. Rational Mech. Anal., \textbf{199}(2011), no.2, 527-561. \href{http://www.ams.org/mathscinet/search/publdoc.html?arg3=&co4=AND&co5=AND&co6=AND&co7=AND&dr=all&pg4=AUCN&pg5=TI&pg6=PC&pg7=ALLF&pg8=ET&review_format=html&s4=&s5=Homogenization%20and%20enhancement%20&s6=&s7=&s8=All&vfpref=html&yearRangeFirst=&yearRangeSecond=&yrop=eq&r=2&mx-pid=2763033}{MR2763033 (2012k:35030)} 
	
	
\bibitem{9}\label{Pelce,P.,Clavin,P.} P. Clavin, P. Pelce, \emph{ Influence of hydrodynamics and diffusion upon the stability limits of laminar premixed flames}. J.Fluid Mech. \textbf{124}(1982), 219-237.	
	
	
	
		
\bibitem{4}\label{ref CS random} P. Cardaliaguet and P. E. Souganidis,
\emph{Homogenization and enhancement of the G-equation in random environments}, Comm. Pure Appl. Math., \textbf{66}(2013), no.10, 1582-1628.
\href{http://www.ams.org/mathscinet/search/publdoc.html?arg3=&co4=AND&co5=AND&co6=AND&co7=AND&dr=all&pg4=AUCN&pg5=TI&pg6=PC&pg7=ALLF&pg8=ET&review_format=html&s4=&s5=Homogenization%20and%20enhancement%20&s6=&s7=&s8=All&vfpref=html&yearRangeFirst=&yearRangeSecond=&yrop=eq&r=1&mx-pid=3084699}{MR3084699}	
			
			
			
\bibitem{5}\label{Evans}L. C. Evans, \emph{Partial Differential Equations}, Second Edition. Volume 19 of Graduate Studies in Mathematics. American Mathematical Society, Providence, RI, 2010. \href{http://www.ams.org/mathscinet/search/publdoc.html?arg3=&co4=AND&co5=AND&co6=AND&co7=AND&dr=all&pg4=AUCN&pg5=TI&pg6=PC&pg7=ALLF&pg8=ET&review_format=html&s4=Evans&s5=Partial%20Differential%20Equations&s6=&s7=&s8=All&vfpref=html&yearRangeFirst=&yearRangeSecond=&yrop=eq&r=2&mx-pid=2597943}{MR2597943 (2011c:35002)}

\bibitem{6}\label{Homogenization of general coercive H-J by Gao}H. Gao, Random homogenization of coercive Hamilton-Jacobi equations in 1d, preprint, arXiv:1507.07048 [math.AP]

\bibitem{6}\label{Lions Souganidis corrector}P. -L. Lions and P. E. Souganidis. \emph{Correctors for the homogenization of Hamilton-Jacobi equations
in the stationary ergodic setting.} Comm. Pure Appl. Math., 56(10):1501–1524, 2003.\href{http://www.ams.org/mathscinet/search/publdoc.html?arg3=&co4=AND&co5=AND&co6=AND&co7=AND&dr=all&pg4=AUCN&pg5=TI&pg6=PC&pg7=ALLF&pg8=ET&r=1&review_format=html&s4=&s5=Correctors%20for%20the%20homogenization%20of%20Hamilton-Jacobi%20equations%20in%20the%20stationary%20ergodic%20setting&s6=&s7=&s8=All&vfpref=html&yearRangeFirst=&yearRangeSecond=&yrop=eq}{MR1988897 (2004m:35018)}		
				
\bibitem{6}\label{Matalon, M.Matkowsky} M. Matalon, B. J. Matkowsky,\emph{ Flames as gasdynamic discontinuities}. J.Fluid Mech. \textbf{124}(1982), 239-259.
				
				
				
\bibitem{7}\label{Nolen and Novikov}J. Nolen and A. Novikov,
\emph{Homogenization of the G-equation with incompressible random drift in two dimensions}, Commun. Math. Sci. \textbf{9}(2011), no.2, 561-582.
\href{http://www.ams.org/mathscinet/search/publdoc.html?arg3=&co4=AND&co5=AND&co6=AND&co7=AND&dr=all&pg4=AUCN&pg5=TI&pg6=PC&pg7=ALLF&pg8=ET&r=1&review_format=html&s4=&s5=Homogenization%20of%20the%20G-equation%20with%20incompressible%20random%20drift%20in%20two%20dimensions&s6=&s7=&s8=All&vfpref=html&yearRangeFirst=&yearRangeSecond=&yrop=eq}{MR2815685 (2012g:35025)}
					
					
\bibitem{8}\label{ref N.Peter} N. Peters,
\emph{Turbulent Combustion}, Cambridge University Press, Cambridge, 2000. \href{http://www.ams.org/mathscinet/search/publdoc.html?arg3=&co4=AND&co5=AND&co6=AND&co7=AND&dr=all&pg4=AUCN&pg5=TI&pg6=PC&pg7=ALLF&pg8=ET&review_format=html&s4=&s5=Turbulent%20Combustion&s6=&s7=&s8=All&vfpref=html&yearRangeFirst=&yearRangeSecond=&yrop=eq&r=13&mx-pid=1792350}{MR1792350 (2001j:80007)}
						
						

						
						
						
\bibitem{10}\label{ref P.Ronny} P. Ronney,
\emph{Some open issues in premixed turbulent combustion}, Modeling in Combustion Science (J.D.Buckmaster and T.Takeno, Eds.), Lecture Notes In Physics, Vol.449, Springer-Verlag, Berlin, 1-22, 1995.
						
						
						
						
						
\bibitem{11}\label{ref J.Xin and Y.Yu 2010} J. Xin and Y. Yu,
\emph{Periodic homogenization of the inviscid G-equation for incompressible flows},
Commun. Math. Sci., \textbf{8}(2010), no.4, 1067-1078. \href{http://www.ams.org/mathscinet/search/publdoc.html?arg3=&co4=AND&co5=AND&co6=AND&co7=AND&dr=all&pg4=AUCN&pg5=TI&pg6=PC&pg7=ALLF&pg8=ET&review_format=html&s4=&s5=Periodic%20homogenization&s6=&s7=&s8=All&vfpref=html&yearRangeFirst=&yearRangeSecond=&yrop=eq&r=21&mx-pid=2744920}{MR2744920 (2012d:76101)}
							
							
\bibitem{12}\label{ref Xin and Yu} J. Xin and Y. Yu, \emph{Front quenching in the G-equation model induced by straining of cellular flow}, Arch. Rational Mech. Anal. \textbf{214}(2014), no.1, 1-34. \href{http://www.ams.org/mathscinet/search/publdoc.html?arg3=&co4=AND&co5=AND&co6=AND&co7=AND&dr=all&pg4=AUCN&pg5=TI&pg6=PC&pg7=ALLF&pg8=ET&r=1&review_format=html&s4=&s5=Front%20quenching%20in%20the%20G-equation%20model&s6=&s7=&s8=All&vfpref=html&yearRangeFirst=&yearRangeSecond=&yrop=eq}{MR3237880}
								
\bibitem{13}\label{Xin Yu personal communication} J. Xin and Y. Yu, Personal communication.
								
								
								
								

\end{thebibliography}

\end{document}